\newtheorem{thm}{\bf Theorem}[section]
\newtheorem{eg}[thm]{\bf Example}
\newtheorem{prop}[thm]{\bf Proposition}
\newtheorem{cor}[thm]{\bf Corollary}
\newtheorem{mydef}[thm]{\bf Definition}
\newtheorem{lem}[thm]{\bf Lemma}
\theoremstyle{remark}
\newtheorem{rem}[thm]{\bf Remark}
\newcommand{\ini}{\mathrm{in}}
\newcommand{\CC}{\mathbb{C}}
\newcommand{\RR}{\mathbb{R}}
\newcommand{\ZZ}{\mathbb{Z}}
\newcommand{\NN}{\mathbb{N}}
\newcommand{\br}{\mathbf{r}}
\newcommand{\SSYT}{\mathrm{SSYT}}
\newcommand{\Mon}{\mathrm{Mon}}
\newcommand{\Cone}{\mathrm{Cone}}
\newcommand{\gr}{\mathrm{gr}}
\newcommand{\Exp}{\mathrm{exp}}
\newcommand{\LM}{\mathrm{LM}}
\newcommand{\lcm}{\mathrm{lcm}}
\newcommand{\K}{\mathrm{K}^n_{r_1 r_2}}
\DeclareMathOperator{\Rep}{\mathrm{Rep}}
\DeclareMathOperator{\Spec}{\mathrm{Spec}}
\DeclareMathOperator{\Hom}{\mathrm{Hom}}
\DeclareMathOperator{\GL}{\mathrm{GL}}
\DeclareMathOperator{\Proj}{\mathrm{Proj}}
\DeclareMathOperator{\Gr}{\mathrm{Gr}}
\DeclareMathOperator{\Sym}{\mathrm{Sym}}
\DeclareMathOperator{\Cox}{\mathrm{Cox}}
\DeclareMathOperator{\Mat}{\mathrm{Mat}}
\DeclareMathOperator{\sign}{\mathrm{sign}}
\DeclareMathOperator{\SI}{\mathrm{SI}}
\DeclareMathOperator{\id}{\mathrm{id}}
\newcolumntype{C}[1]{>{\centering\let\newline\\\arraybackslash\hspace{0pt}}m{#1}}
\title[]{A toric degeneration of Kronecker moduli spaces}
\author[E.~Kalashnikov]{E.~Kalashnikov}
\address{Elana Kalashnikov \newline \indent  Department of Pure Mathematics, University of Waterloo,\newline \indent
Waterloo, Canada N2L 3G1}
\email{e2kalash@uwaterloo.ca}
\thanks{The author is supported by an NSERC Discovery Grant. The author thanks Liana Heuberger, Fatemeh Mohammadi, and Oliver Clarke for helpful conversations.}
\begin{document}
\begin{abstract}  In this paper, we show that there is a finite SAGBI basis of the coordinate ring of a Kronecker quiver moduli space, indexed by primitive semi-standard tableaux pairs. This induces a toric degeneration of the Kronecker moduli space to a normal toric variety, a generalization of the toric degeneration of the Grassmannian to the Gelfand--Cetlin polytope constructed by Gonciulea--Lakshmibai \cite{GL}. The moment polytope of the degenerate toric variety can be described as the intersection of two Gelfand--Cetlin polytopes.   We explain when this can be generalized to degenerations coming from matching fields. 
\end{abstract}
\maketitle

\section{Introduction}\label{sec:intro}
The Gelfand--Cetlin toric degeneration of the Grassmannian has been an important construction in mathematics, with generalizations, links, and applications in such diverse areas as Schubert calculus, mirror symmetry, canonical bases, integrable systems and others. This degeneration was constructed by \cite{GL}, and the polytope associated to the degenerate toric variety is the image of the independently constructed Gelfand--Cetlin integrable system \cite{schubert}.  Grassmannians are the simplest family of Kronecker moduli spaces, which are the moduli spaces of (generalized) Kronecker quivers. In this paper, we generalize the Gelfand--Cetlin degeneration to Kronecker moduli spaces.  

The generalized Kronecker quiver (henceforth, just Kronecker quiver) is the quiver with two vertices $1$ and $2$, with $n$ arrows from $1$ to $2$:
 \[\begin{tikzcd}
 1 \arrow[r,"n"] & 2
\end{tikzcd} \]
 Fix a dimension vector $\br=(r_1,r_2).$  Let $G=\GL(r_1) \times \GL(r_2)$, and set  
 \[\theta:=(-r_2,r_1) \in \ZZ^2 \cong \chi(G),\]
 where $\chi(G)$ is the character lattice of $G$. 
 The Kronecker moduli space $\K$ is the projective geometric invariant theory (GIT) quotient 
\[\Mat(r_2 \times r_1)^{\oplus n}/\!/_\theta G,\]
where the group acts by change of basis. When either $r_1$ or $r_2$ is 1, the GIT quotient is a Grassmannian.

The basic steps in constructing the Gelfand--Cetlin toric degeneration of the Grassmannian $\Gr(r,n)$ are as follows. One first fixes a term order on the polynomial ring $\CC[x_{ij}]$, where the $x_{ij}$ are the coordinates on an $r \times n$ matrix, such that the leading monomial $\LM(m)$ of any minor $m$ is the diagonal monomial. The coordinate ring $\CC[\Gr(r,n)]$ of the Grassmannian is a sub-algebra of $\CC[x_{ij}]$. Let $\SSYT(n)$ denote the set of semi-standard Young tableaux with labels in the set $\{1,\dots,n\}$. Then:
\begin{enumerate}
\item Establish bijections between the following three sets:
\[ \{\text{rectangular elements of $\SSYT(n)$ with $r$ rows}\},\]
\[\{\LM(f): f \in \CC[\Gr(r,n)]\},\] 
and a set $\mathcal{B} \subset \CC[\Gr(r,n)]$ which is a vector space basis of the Pl\"ucker algebra.
\item Describe a finite set $S$ in $\SSYT(n)$ such that the associated subset of $\mathcal{B}$ is a SAGBI basis for $\CC[\Gr(r,n)]$; that is, for any $f \in \CC[\Gr(r,n)]$, the leading term of $f$ is a monomial in the leading terms of elements from this subset. 
\item Use the combinatorics of SSYT to describe the toric degeneration and associated polytope. 
\end{enumerate}
In \cite{HeubergerKalashnikov} (joint work with the author here and Heuberger),  the first step above is completed for Kronecker quivers, by introducing \emph{linked tableaux pairs}. In this paper, we show that a finite set, as in the second step, exists. This gives the existence of a finite SAGBI basis, and a toric degeneration. We describe the associated polytope as an intersection of Gelfand--Cetlin polytopes associated to $\Gr(r_1,nr_2)$ and $\Gr(r_2,n r_1)$. We hope that the combinatorics of linked tableaux will allow the third step to be completed more explicitly than the current description.

To state our results more precisely, we recall the definition of linked tableau pairs for $\K$.  We say a partition has shape $p \times q$ if the associated Ferrers' diagram is the  $p \times q$ rectangle.   Let $\alpha^-$ be the partition  of shape $r_1 \times a r_2$, and  $\alpha^+$  the partition of shape $r_2 \times a r_1$ for some $a \in \frac{1}{\gcd(r_1,r_2)} \NN.$ Let $T^-$ be a tableau of shape $\alpha^-$, filled with entries of the form $ij$, where 
 \[i \in \{1,\dots,n\}, \hspace{5mm} j \in \{1,\dots,r_2\}.\]
  Let $T^+$ be a tableau of shape $\alpha^+$, filled with entries of the form $ij$, where 
  \[i \in \{1,\dots,n\}, \hspace{5mm} j \in \{1,\dots,r_1\}.\]
 
The pair $(T^+,T^-)$ form a \emph{linked tableaux pair}  if there is a bijection between the labels of $T^-$ and the labels of $T^+$ taking a label $iq$ in the $p^{th}$ row of $T^-$ to a label $ip$ in the $q^{th}$ row of $T^+.$ We call this bijection the \emph{link} between the pair. Define an ordering on the labels by setting $ip < jq$ if $i <j$ or $i=j$ and $p < q.$ In this way, we extend the notion of semi-standard to these tableaux with double entries: a linked pair is \emph{semi-standard} if both $T^+$ and $T^-$ are semi-standard. A semi-standard linked pair is \emph{primitive}, if it cannot be subdivided into two sets of semi-standard linked pairs (see Definition \ref{def:primitive} for a precise definition). For example, the following is a primitive semi-standard linked pair for $r_1=2,$ $r_2=3$:
\[T^+=\begin{ytableau}
1 1 & 1 1\\
1 2 & 2 1\\
2 2 & 2 2 \\
\end{ytableau}\hspace{8mm} 
T^-=\begin{ytableau}
1 1 & 1 1 & 2 2\\
1 2 & 2 3 & 2 3\\
\end{ytableau}\]

The first result of the paper is the following:
\begin{thm}[see Theorem \ref{thm:finite}] For any $r_1,r_2,$ and $n$, the set of primitive semi-standard tableaux is finite.
\end{thm}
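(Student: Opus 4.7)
The plan is to realize each semi-standard linked pair as a lattice point in a pointed rational polyhedral cone, identify ``primitive'' with ``indecomposable element'' of the resulting affine semigroup, and conclude by Gordan's lemma.

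First, to each semi-standard linked pair $(T^-,T^+)$ I would associate the \emph{overlap vector}
\[
\mathcal{O}(T^-,T^+) := (M_{p,q,i}) \in \NN^{r_1 r_2 n},
\]
where $M_{p,q,i}$ is the number of occurrences of the label $iq$ in row $p$ of $T^-$ (equivalently, by the linking condition, the number of occurrences of $ip$ in row $q$ of $T^+$). Because each row of an SSYT is determined by its multiset of entries, this assignment is a bijection onto its image $S \subseteq \NN^{r_1 r_2 n}$, and the shape parameter $a$ is recovered from any row-sum.

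Next, I would cut out $S$ by linear conditions. Uniformity of the row lengths of $T^-$ and $T^+$ gives that $\sum_{q,i} M_{p,q,i}$ is independent of $p$ and $\sum_{p,i} M_{p,q,i}$ is independent of $q$. The column-strict-increase condition of $T^-$, applied to the sorted row multisets, is equivalent to inequalities of the form $\sum_{iq \leq \ell} M_{p+1,q,i} \leq \sum_{iq < \ell} M_{p,q,i}$ for each label $\ell$ and each $1 \leq p < r_1$; symmetric inequalities handle $T^+$. Thus $S$ consists of the lattice points of a rational polyhedral cone $C$, which is pointed because $C$ lies in the non-negative orthant.

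A standard merge/interlacing argument for sorted sequences --- if $x_c < y_c$ for every $c$ in each of two sequences then the same strict inequality holds between the sorted unions --- shows that the above inequalities are preserved under addition of overlaps. Hence $S$ is an affine sub-semigroup of $\NN^{r_1 r_2 n}$. By Definition~\ref{def:primitive}, a subdivision of $(T^-,T^+)$ into two semi-standard linked pairs corresponds precisely to a decomposition $\mathcal{O} = \mathcal{O}_1 + \mathcal{O}_2$ in $S$ with both summands nonzero, so the primitive pairs biject with the Hilbert basis of $S$. The Hilbert basis of a pointed affine semigroup is finite by Gordan's lemma, completing the proof. I expect the main obstacle to be the closure step: verifying that the sort-and-stack SSYT condition persists when two overlap vectors are added is the technical heart of the argument, and should reduce to the pointwise strict-inequality merging lemma just quoted.
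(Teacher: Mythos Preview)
Your proposal is correct and follows essentially the same route as the paper's proof of Theorem~\ref{thm:finite} and its corollary: the paper also encodes a semi-standard linked pair by the exponent vector $v_{T^\pm}$ (your overlap vector), shows that the set of such vectors is exactly $C \cap M$ for a rational polyhedral cone $C$ cut out by the row-sum equalities and the column-strictness inequalities you write down, and then invokes Gordan's lemma. One small remark: your anticipated ``main obstacle''---closure of $S$ under addition via a merge/interlacing lemma---is not actually needed, since once you have established $S = C \cap \ZZ^{r_1 r_2 n}$ the semigroup property is automatic; the paper accordingly does not argue this step separately.
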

This means that the set of primitive linked tableaux index a SAGBI basis. In this paper, all SAGBI bases are assumed to be finite. As an application, we obtain the following statement:
\begin{thm}[see Theorem \ref{thm:intersection}] There is a flat degeneration of $\K$ to a normal toric variety $X_F$, where $F$ is the normal fan to the polytope $P$ obtained by taking the scaled intersection of two Gelfand--Cetlin polytopes.
\end{thm}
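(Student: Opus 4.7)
The plan is to bootstrap from the finite SAGBI basis of Theorem 1 using the standard Sturmfels-style degeneration machinery. A finite SAGBI basis for the $\theta$-semi-invariant ring $\CC[\K]$ produces a $\CC[t]$-flat graded Rees-type family interpolating between $\CC[\K]$ and its initial algebra. The initial algebra is the semigroup algebra $\CC[S]$, where $S$ is the finitely generated semigroup of exponent vectors $\{\LM(b): b \in \mathcal{B}\}$ and $\mathcal{B}$ is the SAGBI basis indexed by primitive semi-standard linked pairs. Taking $\Proj$ over $\CC[t]$ with respect to the natural grading by $k\theta$ yields a flat degeneration of $\K$ to the projective toric variety $\Proj\CC[S]$. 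The substantive content of the theorem is then to (a) identify this toric variety with $X_F$ for the normal fan $F$ of an explicit polytope, (b) realize that polytope as the scaled intersection of two Gelfand--Cetlin polytopes, and (c) establish normality.

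For (a) and (b), I would use the bijection of \cite{HeubergerKalashnikov} to view each SAGBI generator as a primitive semi-standard linked pair $(T^+, T^-)$ with $T^-$ of shape $r_1 \times ar_2$ and $T^+$ of shape $r_2 \times ar_1$. Ordering the double-label alphabets lexicographically identifies $T^-$ with an ordinary SSYT of shape $r_1 \times ar_2$ in $\{1,\dots,nr_2\}$ and hence with a lattice point of the $(ar_2)$-dilation of the Gelfand--Cetlin polytope $P_1$ of $\Gr(r_1, nr_2)$; similarly $T^+$ corresponds to a lattice point of $(ar_1)\, P_2$ for the Gelfand--Cetlin polytope $P_2$ of $\Gr(r_2, nr_1)$. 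The link, which matches a label $iq$ in row $p$ of $T^-$ with a label $ip$ in row $q$ of $T^+$, becomes a system of linear equalities between coordinates of the two Gelfand--Cetlin patterns after embedding both polytopes into a common ambient Euclidean space that records row-multiplicities of symbols. The rational polytope cut out by these equalities is the scaled intersection $P = (r_2 P_1) \cap (r_1 P_2)$, and the level-$a$ lattice points of the cone over $P$ correspond exactly to semi-standard linked pairs at parameter $a$.

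The main obstacle is (c), normality of $X_F$, equivalently saturation of $S$ in its ambient lattice: every lattice point of $\RR_{\geq 0} \cdot S$ must lie in $S$. This does not follow from Theorem 1 alone, which only asserts finiteness of the primitive set. To prove it I would show that any (not necessarily primitive) semi-standard linked pair decomposes as a sum of primitives, which is essentially what the combinatorial argument behind the finiteness theorem should produce by iteratively removing primitive sub-pairs, and then argue that every lattice point of $aP$ is realized by some such pair, using the inverse of the Gelfand--Cetlin $\leftrightarrow$ SSYT bijection on each factor together with the linking compatibility. A subsidiary technical issue is choosing the common ambient space so that $(r_2 P_1)\cap (r_1 P_2)$ literally captures the linking equations; this is pure bookkeeping but has to be pinned down before the polytope intersection statement can be stated precisely. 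Once saturation is in hand, $\Proj\CC[S]$ coincides with the normal projective toric variety $X_F$ defined by the normal fan $F$ of $P$, and the theorem follows.
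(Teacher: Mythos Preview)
Your overall strategy is correct and aligns with the paper's, but you misidentify where the work lies. The paper does not treat saturation as a separate obstacle to be overcome after finiteness. Instead, the body version of ``Theorem~1'' (Theorem~\ref{thm:finite}) proves both at once by exhibiting $C^n_{r_1 r_2}$ as an explicit intersection of halfspaces: positivity, the row/column sum equalities~\eqref{eq:sum1}--\eqref{eq:sum2}, and the semi-standardness inequalities~\eqref{eq:inequalities}. Gordan's lemma then gives finite generation, and the same inequalities show directly that every lattice point of the cone is $v_{T^\pm}$ for some semi-standard pair. So normality is free once you have the halfspace description; your proposed decomposition-into-primitives argument is unnecessary.

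Your treatment of the link as ``a system of linear equalities'' is also more convoluted than needed. The paper embeds both Gelfand--Cetlin cones directly in the single lattice $M=\Mat(r_2\times r_1,\ZZ)^{n}$ of exponent vectors (by stacking the $n$ matrices horizontally or vertically), and then observes that a lattice point lies in $C^{nr_1}_{1\,r_2}\cap C^{nr_2}_{1\,r_1}$ if and only if the associated monomial arises as $\Mon(T^+)$ and as $\Mon(T^-)$ for semi-standard $T^+,T^-$; sharing a monomial makes them \emph{automatically} linked. So no extra equalities are imposed---the intersection $r_2 P_{r_1,nr_2}\cap r_1 P_{r_2,nr_1}$ in $M$ is literally $P^n_{r_1 r_2}(1)$. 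Your route via $P_1\times P_2$ cut by linking equations would work after unwinding, but the paper's direct embedding is what makes the ``bookkeeping'' you flag as a subsidiary issue disappear entirely.
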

Applying the work of \cite{haradakaveh}, we see that there is an integrable system on $\K$:
\begin{cor}
There is an integrable system on the Kronecker moduli space $\K$, whose image is a scaled intersection of the images of the Gelfand--Cetlin integrable systems of $\Gr(r_1,n r_2)$ and $\Gr(r_2,n r_1)$. \end{cor}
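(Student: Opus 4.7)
The plan is to derive this corollary as an essentially formal consequence of Theorem \ref{thm:intersection} together with the general construction of \cite{haradakaveh}. Recall that Harada--Kaveh show that whenever a projective variety $X$ admits a flat projective degeneration to a normal projective toric variety $X_0$, there exists a completely integrable system on $X$ whose moment map image coincides with the moment polytope of $X_0$. Theorem \ref{thm:intersection} provides exactly such a degeneration of $\K$ to the normal toric variety $X_F$, whose moment polytope $P$ is the scaled intersection of the Gelfand--Cetlin polytopes of $\Gr(r_1, n r_2)$ and $\Gr(r_2, n r_1)$.

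The first step is thus to invoke the Harada--Kaveh machinery applied to the SAGBI degeneration constructed above, producing an integrable system on $\K$ whose image is precisely $P$. A small point of hygiene is that $\K$ may be singular for some choices of $\br$; however, the Harada--Kaveh construction is known to go through for normal projective varieties (one constructs the integrable system on the smooth locus and observes that the closure of its image is unchanged), so this is not a real obstacle. The SAGBI basis provided by Theorem \ref{thm:finite} and the normality of $X_F$ established in Theorem \ref{thm:intersection} together supply precisely the input needed for \cite{haradakaveh}.

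The second step is to identify $P$ with the intersection of the images of the two Gelfand--Cetlin integrable systems. This is essentially tautological: by the original Gelfand--Cetlin construction (see \cite{schubert}), the image of the Gelfand--Cetlin integrable system on $\Gr(r,m)$ is by definition the Gelfand--Cetlin polytope of that Grassmannian. Hence the scaled intersection of Gelfand--Cetlin polytopes appearing in Theorem \ref{thm:intersection} is literally the scaled intersection of the images of the two Gelfand--Cetlin integrable systems, and the corollary follows.

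The only step which requires any care is the first one, namely verifying that the hypotheses of the Harada--Kaveh theorem are satisfied by our degeneration; this amounts to checking projectivity, flatness, and normality of the central fiber, all of which are already packaged into Theorem \ref{thm:intersection}. Consequently the proof is essentially a one-line application of \cite{haradakaveh} once the previous theorem is in hand.
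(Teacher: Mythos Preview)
Your proposal is correct and matches the paper's approach exactly: the paper does not give a separate proof but simply states the corollary as an immediate application of \cite{haradakaveh} to the degeneration of Theorem \ref{thm:intersection}. Your write-up just makes explicit the verification of hypotheses and the identification of the Gelfand--Cetlin polytopes with the images of the Gelfand--Cetlin integrable systems, which the paper leaves implicit.
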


The Gelfand--Cetlin degeneration has been generalized to matching field degenerations  for coherent matching fields\cite{mohammadi1,mohammadi2,mohammadi3}. We generalize the notion of \emph{coherent matching fields} to Kronecker moduli, and give in Theorem \ref{thm:MF} sufficient conditions for a coherent matching field on $\K$ to induce a toric degeneration on $\K$. Under the assumptions of this theorem, the degeneration is a toric degeneration of $\K$ to a normal toric variety $X_F$, where $F$ is the normal fan to a polytope $P$ obtained by taking the scaled intersection of two matching field polytopes. We give a detailed description of a matching field and toric degeneration where the Theorem applies in two examples.

\subsection*{Plan of the paper}
In \S \ref{sec:review}, we review the basic constructions: SAGBI bases,
Kronecker quivers, semi-invariants, and linked tableaux. In \S \ref{sec:finiteness}, we prove that the set of primitive semi-standard tableaux pairs is finite, and describe the two main implications: a toric degeneration and integrable system on $\K$. In \S \ref{sec:matching}, we generalize the notion of matching field degenerations to Kronecker quivers. In the last section of the paper, \S \ref{sec:examples}, we give explicit computations and examples of the results for some small Kronecker moduli.

\section{Semi-invariants and linked pairs of tableaux}\label{sec:review}
\subsection{SAGBI bases and toric degenerations}
In this section, we recall the basics of SAGBI bases and their connection to toric degenerations. SAGBI bases were introduced independently by Robbiano--Sweedler \cite{Robbiano1} (see also \cite{Robbiano2}) and Kapur--Madlener \cite{deepak}. 

On polynomial rings $\CC[x_1,\dots,x_n]$ we consider orders given by weights $w \in \RR^n_{\geq 0}$. Any such $w$ assigns a weight $w_i$ to each $x_i$, and a monomial $\prod_{i=1}^n x_i^{a_i}$ is given the weight $\sum_{i=1}^n a_i w_i.$ This allows us to define a weak order on the monomials in the polynomial ring given by their weights. Given a polynomial $f$, $\ini_<(f)$ is the polynomial made up of the terms of $f$ that achieve the maximum weight among the monomials of $f$. In most cases that we consider, this will be a monomial, which we call the initial term.  
\begin{mydef} Let $A \subset \CC[x_1,\dots,x_n]$ be a sub-algebra, together with a choice of weight $w$.  Then a set $f_1,\dots,f_k$ is a \emph{SAGBI} basis if for each $f_i$, $\ini_<(f_i)$ is a monomial and
 \[\ini_<(A)=\CC[\ini_<(f_1),\dots,\ini_<(f_k)].\]
\end{mydef}
One reason to be interested in SAGBI bases is their role in toric degenerations. Let $X$ be a projective variety $\Proj(R)$, for $R$ a graded sub-algebra of a sufficiently large polynomial ring $\CC[x_1,\dots,x_N]$. Suppose that $f_1,\dots,f_k$ is a finite SAGBI basis for $R$ for some choice of weight. Then $\ini_<(R)$ is generated by the $\ini_<(f_i)$ (in particular, it is finitely generated). There is a flat family of algebras $\mathcal{R} \to \CC$ such that $\mathcal{R}_1=R$ and $\mathcal{R}_0=\ini_<(R)$, obtained by deforming the polynomials in $R$ to their initial terms \cite{eisenbud}.

The initial algebra $\ini_<(R)$ is the coordinate ring of the (possibly) non-normal toric variety $\Proj(\ini_<(R))$. We will be most interested in the case where the semi-group  algebra $\ini_<(R)$ is the semi-group algebra associated to a rational cone. Let $\Exp(m)$ of a monomial $m \in \CC[x_1,\dots,x_N]$ be the exponent vector. We take the cone $C$ over the elements
\[\{ (\Exp(\ini_<(f_i)),\gr(f_i)) \in \RR^N \times \RR: i=1,\dots,k\}.\]
Let $L$ be the standard lattice in $\RR^N \times \RR.$ We \emph{assume} that 
\begin{equation} \label{normal} C \cap L=\{(\Exp(m),\gr(m)): m \text{ a monomial in } \ini_<(R)\}.\end{equation}
That is, suppose that the lattice points of the cone over the exponents of the $\ini_<(f_i)$ are exactly the set of exponents of monomials in $\ini_<(A)$. Then $\Spec(\ini_<(R))$ is a normal toric variety. 

\begin{thm} In the above set up,  there is a flat degeneration of $X$ to $\Proj(\ini_<(R))$.   
\end{thm}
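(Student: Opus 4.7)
The plan is to projectivize the affine flat deformation $\mathcal{R} \to \Spec \CC[t]$ already described in the excerpt. Concretely, $\mathcal{R}$ is bigraded: it inherits the original grading from $R$ and simultaneously carries the $\CC[t]$-algebra structure coming from the deformation parameter $t$, and the two gradings are compatible. I would form $\mathcal{X} := \Proj_{\CC[t]}(\mathcal{R})$ with respect to the first grading, obtaining a morphism $\pi : \mathcal{X} \to \Spec \CC[t] = \mathbb{A}^1_{\CC}$. The goal is then to verify that $\pi$ is flat and has the expected fibers $X$ and $\Proj(\ini_<(R))$ over $t=1$ and $t=0$.

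For flatness, I would note that $\mathcal{R}$ sits inside the polynomial ring $R[t]$, which is $\CC[t]$-torsion-free; hence $\mathcal{R}$ is $\CC[t]$-torsion-free, and since $\CC[t]$ is a PID, this is equivalent to flatness. Because the SAGBI basis $f_1,\dots,f_k$ is finite, the homogenizations $\tilde f_1,\dots,\tilde f_k$ (with respect to a weight vector $\mathbf{w}$ realizing $\ini_<$ on the leading monomials that appear) together with $t$ generate $\mathcal{R}$ as a $\CC$-algebra. Flatness of $\mathcal{R}$ as a $\CC[t]$-module is then inherited by $\Proj_{\CC[t]}(\mathcal{R})$: in this finitely generated graded setting, the formation of $\Proj$ commutes with base change along the closed immersions $t \mapsto 0$ and $t \mapsto 1$, and flatness of the resulting family can be checked graded-component by graded-component.

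Finally, I would identify fibers. At $t=1$, each $\tilde f_i$ specializes back to $f_i$, giving $\mathcal{R}/(t-1) \cong R$ and hence $\pi^{-1}(1) \cong \Proj(R) = X$. At $t=0$, the specialization picks out the $\mathbf{w}$-leading terms, so $\mathcal{R}/(t) \cong \ini_<(R)$ and $\pi^{-1}(0) \cong \Proj(\ini_<(R))$. The main — and really only — substantive point is the identification $\mathcal{R}/(t) \cong \ini_<(R)$: this is precisely what the SAGBI property guarantees, namely that $\ini_<(R)$ is generated by the $\ini_<(f_i)$, so no extra elements appear in the $t=0$ fiber. Without the SAGBI condition, the special fiber could strictly contain $\ini_<(R)$ or acquire non-reduced structure, and it is here that the finite-SAGBI-basis hypothesis is essential; everything else — flatness, base change for $\Proj$, and recovery of $R$ at $t=1$ — is formal from the Rees construction.
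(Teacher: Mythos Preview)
The paper does not supply its own proof of this theorem; immediately after the statement it writes ``This appears in various places in the literature'' and cites Eisenbud, Sturmfels, and Kaveh. Your sketch is exactly the standard Rees--algebra/homogenization argument found in those references, so in that sense your approach and the paper's (deferred) approach coincide.

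A couple of small points worth tightening. First, you appeal to a weight vector $\mathbf{w}$ realizing $\ini_<$; this is fine, but it is worth saying explicitly that such a $\mathbf{w}$ exists because only finitely many monomial comparisons are needed (those occurring in the $f_i$ and in a finite generating set of syzygies among the $\ini_<(f_i)$). Second, your attribution of the identification $\mathcal{R}/(t)\cong \ini_<(R)$ to the SAGBI property is slightly off in emphasis: the SAGBI condition is what guarantees that the $f_i$ generate $R$ (via subduction) and that $\ini_<(R)$ is generated by the $\ini_<(f_i)$, hence finitely generated; the equality of the special fiber with $\ini_<(R)$ then follows from flatness together with the fact that $R$ and $\ini_<(R)$ have the same Hilbert function. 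None of this affects the correctness of your outline.
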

This appears in various places in the literature; some relevant (but not exhaustive) citations include \cite[Theorem 15.17]{eisenbud} (flatness, including in the case where $\ini_<$ is not a total order) and \cite{sturmfels}. In \cite{kaveh} there is a nice description of the projective version. 

We now identify the projective toric variety $\Proj(\ini_<(R))$.  We can choose a large enough $m$ such that $P=C \cap \RR^N \times \{m\}$ is a normal lattice polytope. Let $F$ be the normal fan of $P$; if $P$ is not full-dimensional, we take the normal fan of $P-x$ in the real vector subspace given by the span of $P-x$ for any lattice point $x$ in $P$. Then we have the following identification: 

 \begin{cor}\label{cor:toric} $\Proj(\ini_<(R))=X_F$
 \end{cor}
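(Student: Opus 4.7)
The plan is to reduce the statement to the standard $\mathrm{Proj}$ construction of the projective toric variety attached to a rational polytope, once we identify $\ini_<(R)$ as a saturated semigroup algebra.

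First, I would identify $\ini_<(R)$ with the semigroup algebra of the cone. By definition $\ini_<(R)$ is generated as a $\CC$-algebra by the monomials $\ini_<(f_1),\dots,\ini_<(f_k)$, so every monomial in $\ini_<(R)$ is a product of these and hence has exponent-grade vector lying in the semigroup generated by $\{(\Exp(\ini_<(f_i)),\gr(f_i))\}$. This semigroup sits inside $C\cap L$. Conversely, the standing assumption in the excerpt says exactly that $C\cap L$ equals $\{(\Exp(m),\gr(m)):m\text{ a monomial of }\ini_<(R)\}$. Thus the semigroup of exponents of monomials of $\ini_<(R)$ coincides with $C\cap L$, and $\ini_<(R)\cong\CC[C\cap L]$, which is normal because $C\cap L$ is saturated.

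Second, I would interpret the grading. By construction the grading $\gr$ on $\ini_<(R)$ is the restriction of the last coordinate ($\mathbb{R}^N\times\mathbb{R}\to\mathbb{R}$) to $C\cap L$. The height-$d$ piece of $\ini_<(R)$ is therefore spanned by the monomials $x^v$ with $v\in C\cap L$ of height $d$, which by definition of $P$ is the set of lattice points in $dP$ (intersected with the shift of the linear span of $P$). So $\ini_<(R)=\bigoplus_{d\geq 0}\CC[dP\cap L']$ where $L'$ is the lattice in the affine span of $P$.

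Third, I would invoke the standard $\mathrm{Proj}$-of-polytope construction. For any rational polytope $P$ (not required to be full-dimensional or a lattice polytope, after working in the affine span of $P$ and, if necessary, passing to a multiple so the vertices are lattice points), the associated normal projective toric variety $X_F$ with fan equal to the normal fan $F$ of $P$ is realized as $\mathrm{Proj}$ of the graded semigroup algebra of the cone over $P\times\{1\}$ with grading given by height; this is classical, see e.g.\ Cox--Little--Schenck, \emph{Toric Varieties}, and is precisely the construction cited in \cite{kaveh}. Applying this to our situation yields $\Proj(\ini_<(R))=X_F$.

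The only mild obstacle is the dimensional/lattice bookkeeping: since $P$ is a priori neither a lattice polytope nor full-dimensional in $\mathbb{R}^N$, one must check that passing to the affine span of $P$ and, if needed, rescaling by an integer $k$ so that $kP$ is a lattice polytope, leaves $\mathrm{Proj}(\ini_<(R))$ unchanged (it does, since $\mathrm{Proj}$ is insensitive to a uniform rescaling of the grading, and the normal fan depends only on $P$ up to scaling within its affine span). Once this is noted, normality from saturation and the polytopal $\mathrm{Proj}$ construction together identify $\mathrm{Proj}(\ini_<(R))$ with $X_F$ as claimed.
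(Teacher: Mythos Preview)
Your proposal is correct and follows essentially the same approach as the paper: identify $\ini_<(R)$ with the saturated semigroup algebra $\CC[C\cap L]$, invoke a standard reference for the $\Proj$-of-a-polytope construction of $X_F$, and handle the case where $P$ is not a lattice polytope by rescaling (the paper phrases this as passing to the $m^{th}$ Veronese, which is exactly your ``$\Proj$ is insensitive to a uniform rescaling of the grading''). The only cosmetic difference is that the paper cites Anderson's Lemma~2.3 for the degree-one-generated case and then reduces to it, whereas you appeal directly to the general polytopal $\Proj$ construction in Cox--Little--Schenck and \cite{kaveh}.
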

If we had not assumed  \eqref{normal}, then $X_F$ would be the normalization of $\Proj(\ini_<(R)).$ This is a standard result in toric geometry, but is specifically addressed in the discussion following Proposition 3.4 in \cite{Feigin} in the situation where the cone is not necessarily full dimensional. 
 
 \subsection{Construction of Kronecker moduli}
 This paper is interested in quiver moduli spaces given by \emph{Kronecker quivers}. These are quivers with two vertices, and $n$ arrows between them: 
\[\begin{tikzcd}
 1 \arrow[r,"n"] & 2
\end{tikzcd} \]
We denote the quiver with $n$ arrows as $Q^n$, and set $Q^n_0=\{1,2\}$ to be the set of vertices, and $Q^n_1$ the set of arrows. There are maps $s,t:Q^n_1 \to Q^n_0$ taking arrows to their source and target vertices respectively. 

To describe a moduli problem, we assign dimensions to each vertex; that is, we fix \[\br:=(r_1,r_2) \in \NN^2.\]  The set of representations of the quiver of dimension $\br$ is
\[\Rep(Q^n,\br):=\Hom_{a \in Q^n_1}(\CC^{r_{s(a)}}, \CC^{r_{t(a)}})=\Mat(r_2 \times r_1)^{\oplus n}.\]
We will need coordinates on this space: we let 
\begin{equation} \label{def:Ai} (x^i_{jk})_{\substack{1 \leq j \leq r_2,\\1 \leq k \leq r_2}}\end{equation}
be coordinates on the $i^{th}$ matrix $A_i$ in the $n$-tuple of matrices $A_1,\dots,A_n$. 

The group $G:=\GL(r_1) \times \GL(r_2)$ acts by change of basis on $\Rep(Q^n,\br)$. The character lattice of $G$, denoted $\chi(G)$, can be identified with $\ZZ^2$, where the standard generators are given by the two determinant maps.  Let $\lambda_0:=\CC^* \to G$ be the diagonal one parameter subgroup. Then the image of $\lambda_0$ acts trivially on $G$. For any character $\theta \in \chi(G)$ such that $\langle \theta,\lambda_0 \rangle =0$, we can define the GIT quotient
\[M_\theta(Q^n,\br)=\Rep(Q^n,\br)/\!/_\theta G.\]

As it turns out, for the Kronecker quiver, there is only one non-trivial GIT quotient, induced by setting $\theta$ to be any positive multiple of $(-r_2,r_1)$. For such a choice of $\theta,$ we set 
\[\K:=M_\theta(Q^n,\br).\]

\begin{thm}\cite{reineke, fanoquiver} When $\gcd(r_1,r_2)=1$, $n \geq 3$, $\K$ is a smooth Fano variety of Picard rank one and Fano index $n$. \end{thm}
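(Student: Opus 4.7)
My plan is to verify the three claims (smoothness, Picard rank one, and the identification $-K_{\K} = nH$ for $H$ the ample generator) by combining King's stability criterion for quiver representations with explicit character computations for the $G = \GL(r_1)\times \GL(r_2)$-action on $V = \Mat(r_2\times r_1)^{\oplus n}$.

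For smoothness I first show $V^{ss} = V^s$. By King's criterion a subrepresentation $W'$ of dimension $(r_1',r_2')$ satisfies $\langle \theta, (r_1',r_2')\rangle = -r_2 r_1' + r_1 r_2'$, so a strictly semistable point would require a proper nonzero $W'$ with $r_1 r_2' = r_2 r_1'$. Since $\gcd(r_1,r_2)=1$ forces any such $(r_1',r_2')$ to be an integer multiple of the primitive vector $(r_1,r_2)$, no proper nonzero solution exists, giving $V^{ss}=V^s$. Stable quiver representations have endomorphism ring $\CC$, so the stabilizer in $G$ of any point of $V^s$ is exactly $\lambda_0$; thus $G/\lambda_0$ acts freely on $V^s$ and the geometric quotient $\K$ is smooth.

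For the Picard group, in this free-action setting there is an identification $\Pic(\K) \cong \ker\bigl(\chi(G) \to \chi(\lambda_0)\bigr)$. Writing characters as $(a,b) \in \ZZ^2$ via $\det^a \boxtimes \det^b$, restriction to $\lambda_0(t) = (tI, tI)$ yields $t^{r_1 a + r_2 b}$, so $\Pic(\K)=\{(a,b) : r_1 a + r_2 b = 0\}$. Under $\gcd(r_1,r_2)=1$ this is the rank-one lattice generated by $\theta=(-r_2,r_1)$; let $H$ denote this generator, which coincides with the polarization induced by $\theta$.

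For the canonical class, the $G$-character on $\det V$ is the sum of all weights: each coordinate $x^i_{jk}$ carries weight $(-e_k, e_j) \in \ZZ^{r_1}\oplus\ZZ^{r_2}$, and summing over $i,j,k$ gives $(-n r_2, n r_1) = n\theta$ at the level of determinant characters. Since the adjoint action of $G$ on $\det\mathfrak{g}$ is trivial, the standard formula for the canonical class of a free GIT quotient yields $K_{\K} = -n\theta = -nH$, so $\K$ is Fano of index $n$. The most delicate step is the careful bookkeeping between $G$-equivariant line bundles on $V^s$ and honest line bundles on $\K$, and identifying which equivariant bundle descends to $K_{\K}$; alternatively, one can invoke the Drezet--King description of Picard groups of quiver moduli to bypass this bookkeeping.
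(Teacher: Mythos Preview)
The paper does not prove this theorem; it is stated with citations to \cite{reineke} and \cite{fanoquiver} and no argument is given. Your sketch is essentially the standard argument underlying those references (King's criterion for $V^{ss}=V^s$ under coprimality, descent of characters to compute $\Pic$, and the determinant-of-weights computation for $-K$), so there is nothing to compare at the level of strategy.

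One point you should tighten: you never use the hypothesis $n\geq 3$, and it is needed. Your identification $\Pic(\K)\cong\ker\bigl(\chi(G)\to\chi(\lambda_0)\bigr)$ relies on $\Pic(V^s)=0$, which in turn requires the unstable locus $V\setminus V^s$ to have codimension at least $2$ in $V$; this fails for small $n$ (and for $n=2$ with $(r_1,r_2)\neq(1,1)$ the moduli space is a point or empty, so there is nothing Fano to discuss). The same codimension condition, together with nonemptiness of $V^s$, is exactly what the bound on $n$ buys, and it is what makes the ``careful bookkeeping'' you flag at the end actually go through. Either verify the codimension estimate directly for the Kronecker quiver when $n\geq 3$, or, as you suggest, invoke the results of King and of Franzen--Reineke--Sabatini on $\Pic$ of quiver moduli, which package this.
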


By definition, the GIT quotient $\Rep(Q^n,\br)/\!/_\theta G$ is \[\Proj(\oplus_{k \in \NN} \SI^G_{k \theta}(Q^n,\br)),\] where $ \SI^G_{k \theta}(Q^n,\br)$ is the set of semi-invariants of $Q^n$ of weight $k \theta \in \chi(G)$. That is,  $ \SI^G_{k \theta}(Q^n,\br)$ is the set of polynomials $f \in   \CC[\Rep(Q^n,\br)]$ satisfying
\[ f(g \cdot v)=(k \theta)(g) f(v),\]
 for all $v \in \Rep(Q^n,\br)$ and $g \in G$.
 
 We define the semi-invariant ring to be
\[\SI^G(Q^n,\br)=\oplus_{\theta \in \chi(G)} \SI^G_\theta(Q^n,\br).\] 

Given $\alpha \in \chi(G)$ satisfying $\langle \alpha,\lambda_0 \rangle=0$, and $f \in \SI^G_\alpha(Q^n,\br)$, the trivial line bundle 
\[ \Rep(Q^n,\br)^{ss} \times \CC \to \Rep(Q^n,\br)^{ss}\]
descends to a line bundle $L_\alpha$ on $M_\theta(Q,\br)$, and $f$ descends to a global section of $L_\alpha$. Roughly speaking, the Cox ring of a variety $X$ is the ring of all global sections of all line bundles on $X$. Therefore, we have described a morphism
\[\SI^G(Q^n,\br) \to \Cox(\K).\]
By \cite[Proposition 5.2]{reineke} and the discussion in \S 5.2 of \cite{fanoquiver}, if $\gcd(r_1,r_2)=1$, the Kronecker moduli space $\K$ satisfies the conditions of \cite[Lemma 3.3]{fanoquiver} and so we can conclude that
\begin{prop} The morphism 
\[\SI^G(Q^n,\br) \to \Cox(\K)\]
is an isomorphism.
\end{prop}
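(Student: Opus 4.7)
The plan is to reduce the statement to \cite[Lemma 3.3]{fanoquiver}, which gives general conditions under which the semi-invariant ring of a reductive group $G$ acting linearly on a vector space $V$ is naturally isomorphic to the Cox ring of the GIT quotient $V/\!/_\theta G$. These conditions are, roughly: (a) semistability equals stability on $\Rep(Q^n,\br)$; (b) the $G$-stabilizer of every stable representation is exactly $\lambda_0(\CC^*)$; (c) the unstable locus has codimension at least $2$ in $\Rep(Q^n,\br)$; and (d) the characters of $G/\lambda_0(\CC^*)$ surject onto $\Pic(\K)$.

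Under the coprimality assumption $\gcd(r_1,r_2)=1$, conditions (a) and (b) are immediate: no proper subrepresentation of a $\theta$-semistable representation can share the slope of the whole, so semistability equals stability, and Schur's lemma then forces the stabilizer of a stable point to be the diagonal scalars $\lambda_0(\CC^*)$. Condition (c) and the Picard-rank input needed for (d) are precisely what \cite[Proposition 5.2]{reineke} and \S 5.2 of \cite{fanoquiver} supply in the coprime Kronecker setting; since $\chi(G)/\langle\lambda_0\rangle \cong \ZZ$ is generated by $\theta$ and $\Pic(\K) \cong \ZZ$ is generated by the ample line bundle $L_\theta$, the natural map from characters to $\Pic(\K)$ is an isomorphism, establishing (d).

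Once (a)--(d) are verified, the morphism of the proposition admits an explicit inverse: any global section of $L_\alpha$ pulls back to a $G$-semi-invariant of weight $\alpha$ on $\Rep(Q^n,\br)^{ss}$, which extends uniquely to a semi-invariant on all of $\Rep(Q^n,\br)$ by Hartogs' theorem, using the codimension bound (c). Injectivity of the original map is clear: a semi-invariant vanishing on the dense open set $\Rep(Q^n,\br)^{ss}$ vanishes identically. The main (and essentially only) obstacle I anticipate is bookkeeping, namely matching up the hypotheses of \cite[Lemma 3.3]{fanoquiver} with our situation and confirming that $L_\theta$ coincides with the polarization induced by $\theta$; beyond this, the proof is a direct invocation of the cited results.
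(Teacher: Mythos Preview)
Your proposal is correct and follows exactly the approach the paper takes: verify that the hypotheses of \cite[Lemma 3.3]{fanoquiver} hold using \cite[Proposition 5.2]{reineke} and the discussion in \S 5.2 of \cite{fanoquiver}, then invoke that lemma. You have in fact spelled out more detail (the roles of coprimality, Schur's lemma, the codimension-$2$ Hartogs argument) than the paper does, but the structure is identical.
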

In \cite{crawunpub}, the authors also discuss this question (more generally than just for Kronecker moduli). 

\subsection{Semi-invariants of Kronecker moduli}
In this section, we recall the results of \cite{HeubergerKalashnikov} on the semi-invariant ring of Kronecker moduli spaces. We first describe how to produce from a \emph{linked pair of tableaux} a Domokos--Zubkov semi-invariant \cite{domokos}. 

Fix a Kronecker moduli space, $\K$, and a character $\alpha \in \chi(G)$ satisfying $\langle \alpha,\lambda_0 \rangle =0.$ That is, if $\alpha=(-\alpha_1,\alpha_2)$, then 
\[r_1 \alpha_1 = r_2 \alpha_2.\]
Unless $\alpha_i>0$, there are no non-trivial semi-invariants, so we assume this. 

Recall from the introduction that a linked pair of tableaux  $(T^+,T^-,\sigma)$ of weight $\alpha$ is the data:
\begin{itemize}
\item a tableaux $T^-$ of size $r_1 \times \alpha_1$, with labels $ik$, where $1\leq i \leq n$, and $1 \leq k \leq r_2,$
\item a tableaux $T^+$ of size $r_2 \times \alpha_2$, with labels $ij$, where $1\leq i \leq n$, and $1 \leq j \leq r_1,$
\item a bijection $\sigma$ taking the labels of $T^-$ to the labels of $T^-$, satisfying the condition that a label $ik$ in row $j$ of $T^-$ is mapped to a label $ij$ in the $k^{th}$ row of $T^+.$
\end{itemize}
To either $T^+$ or $T^-$, we can associate a monomial:
\[\Mon(T^+):=\prod_{k=1}^{r_2} \prod_{\substack{\text{labels } ij \\ \text{ in row $k$}}} x^i_{kj},\]
\[\Mon(T^-):=\prod_{j=1}^{r_1}  \prod_{\substack{\text{labels } ik \\ \text{ in row $j$}}} x^i_{kj}.\]
The link ensures that $\Mon(T^+)=\Mon(T^-)$.

The Weyl group of $G$ is $W=\Sym(r_1) \times \Sym(r_2)$. Set $W^\alpha=\Sym(r_1)^{\alpha_1} \times \Sym(r_2)^{\alpha_2}$. We define an action of $W^\alpha$ on $T^+$:
\begin{itemize}
\item The factor $\Sym(r_2)^{\alpha_2}$ acts by permuting each of the $\alpha_2$ columns of $T^+$. 
\item The factor $\Sym(r_1)^{\alpha_1}$ acts on $T^-$ by permuting each of the $\alpha_1$ columns of $T^-$. Using the link $\sigma,$ this induces an action on the second components of the labels of $T^+$, which is the required action of $\Sym(r_1)^{\alpha_1}$ on $T^+$.  
\end{itemize}
There is an analogous action of $W^\alpha$ on $T^-$. 
\begin{mydef} The semi-invariant of weight $\alpha$ associated to the linked tableaux pair $(T^+,T^-,\sigma)$ is
\[f_{T^\pm}:=\sum_{\sigma \in W^\alpha} \sign(\alpha) \Mon(\alpha \cdot T^+)=\sum_{\sigma \in W^\alpha} \sign(\alpha) \Mon(\alpha \cdot T^-).\]
\end{mydef}
Recall that the $A_i:=[x^i_{jk}]$ are the coordinate matrices defined in \eqref{def:Ai}. 
Unless otherwise stated, we will let $\LM(f)$ denote the leading monomial of an element of the semi-invariant ring, under any term order that is \emph{diagonal}. That is, we require that the leading monomial of any maximal minor of either
\[\begin{bmatrix} A_1 \mid & A_2 \mid &\cdots & A_n\end{bmatrix}\]
or 
\[\begin{bmatrix} A_1 \\
A_2\\
\vdots\\
A_n \end{bmatrix}\]
is the diagonal term. One such term order is given by taking $x^i_{jk}>x^l_{pq}$ when $i<l$, and within each $A_i$, taking the order defined by going across rows left to right and top to bottom.

A linked pair $(T^\pm,\sigma)$ is \emph{semi-standard} if both $T^+$ and $T^-$ are semi-standard tableaux, using the lex ordering on the double labels.
\begin{thm}\cite{HeubergerKalashnikov} Let $f$ be a semi-invariant of $\K$. Then for some semi-standard pair $T^\pm$,
\[\LM(f)=\Mon(T^+)=\LM(f_{T^\pm}).\]
The semi-invariants $f_{T^\pm}$ for $T^\pm$ semi-standard span (as a vector space) the semi-invariant ring.
\end{thm}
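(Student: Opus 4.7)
The plan is to follow the classical Plücker-algebra strategy, extended to the setting of linked tableaux pairs. The three ingredients are a Domokos--Zubkov spanning result, a straightening law coupling $T^+$ and $T^-$, and a leading-term analysis of the alternating sums $f_{T^\pm}$.

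First, I would appeal to the Domokos--Zubkov description of semi-invariants for quivers, which produces a spanning set of $\SI^G(Q^n,\br)$ by determinantal semi-invariants. After translating these determinants into the notation of linked pairs, one obtains that the $f_{T^\pm}$, ranging over all (not necessarily semi-standard) linked pairs of weight $\alpha$, span $\SI^G_\alpha(Q^n,\br)$. This is the crude spanning statement; the work lies in upgrading it.

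Next, I would establish a straightening law: if $(T^+,T^-,\sigma)$ is a linked pair that is not semi-standard, then $f_{T^\pm}$ can be rewritten as a $\ZZ$-linear combination of semi-invariants $f_{T'^\pm}$ whose underlying monomials $\Mon((T')^+)$ are strictly smaller than $\Mon(T^+)$ in the term order. Concretely, a semi-standardness violation in some column of $T^+$ (or $T^-$) gives a transposition in $W^\alpha$ under which $f_{T^\pm}$ is anti-invariant; applying the usual Plücker-style identity on the offending pair of columns and re-collecting terms expresses $f_{T^\pm}$ as a signed sum of $f_{T'^\pm}$ of lower order. The key point that distinguishes this from the Grassmannian case is that one must also keep track of the induced action on the \emph{other} tableau through the link $\sigma$; the definition of $W^\alpha$ precisely guarantees that column-swaps in $T^+$ have compatible counterparts in $T^-$, so the straightening move produces genuine linked pairs. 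Well-foundedness of the lex order then gives the semi-standard spanning assertion.

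Third, I would identify $\LM(f_{T^\pm})$ for semi-standard $T^\pm$ as $\Mon(T^+)$. Under the chosen term order the diagonal term dominates every maximal minor of $[A_1\mid\cdots\mid A_n]$ and of its transposed block; unwinding this shows that among all monomials $\Mon(w\cdot T^+)$ with $w\in W^\alpha$, the one coming from $w=\id$ is maximal precisely when the columns of $T^+$ (and hence, via the link, those of $T^-$) are already sorted, i.e.\ when the pair is semi-standard. Finally, for an arbitrary semi-invariant $f$, write $f=\sum c_{T^\pm}f_{T^\pm}$ in the semi-standard spanning set; distinct semi-standard pairs produce distinct monomials $\Mon(T^+)$ by the bijection established in step~(1) of the program recalled in the introduction (proved in \cite{HeubergerKalashnikov}), so there is no cancellation of leading terms and $\LM(f)=\Mon(T^+)=\LM(f_{T^\pm})$ for the semi-standard pair indexing the maximal summand.

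The main obstacle I expect is the straightening step. In the single-tableau Grassmannian setting, the argument is a well-known consequence of Plücker relations; here the straightening must be performed \emph{simultaneously} on $T^+$ and $T^-$, with the bijection $\sigma$ updated consistently and remaining a valid link at every step. Verifying that each straightening move preserves the linked-pair structure, and that the resulting terms can still be indexed by honest linked pairs (rather than by formal column-swapped objects that might violate the link), is the delicate combinatorial part on which the rest of the argument rests.
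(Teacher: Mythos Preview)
The paper does not contain a proof of this theorem: it is quoted from \cite{HeubergerKalashnikov} and stated without argument, as part of the background material in \S\ref{sec:review}. There is therefore no proof here to compare your proposal against.

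That said, your outline is a plausible reconstruction of how such a result is established, and it matches the shape one expects from the cited reference: Domokos--Zubkov semi-invariants give a crude spanning set, a straightening procedure reduces to semi-standard pairs, and the diagonal-term property of the chosen order pins down $\LM(f_{T^\pm})=\Mon(T^+)$. One small inaccuracy: in your third paragraph you write that sortedness of the columns of $T^+$ implies, via the link, sortedness of the columns of $T^-$. This is not true; the two semi-standardness conditions are genuinely independent (the link only matches row indices to second label components, not column orderings), and both are needed to conclude that $w=\id$ gives the maximal term under the action of $W^\alpha=\Sym(r_1)^{\alpha_1}\times\Sym(r_2)^{\alpha_2}$. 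Your straightening step must therefore separately repair violations in $T^+$ and in $T^-$, which is consistent with the obstacle you flag at the end. The injectivity of $T^\pm\mapsto\Mon(T^+)$ on semi-standard pairs, which you invoke to rule out leading-term cancellation, is exactly the lemma proved at the start of \S\ref{sec:finiteness} of the present paper.
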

We recall one final definition:
\begin{mydef}\label{def:primitive} Let $T^\pm$ be a semi-standard linked pair. Then $T^\pm$ is \emph{primitive} if there does not exist semi-standard pairs $T_1^\pm$ and $T_2^\pm$ such that
\[\Mon(T^+)=\Mon(T^+_1)\Mon(T^+_2).\]
\end{mydef}
The main goal of this paper is to show that there are only finitely many primitive semi-standard tableaux pairs.
\section{The main theorem}\label{sec:finiteness}
The first step is to translate the problem above into a question of finite generation of cones. Fixing a Kronecker moduli space $\K$, we consider the lattice $M=\ZZ^{nr_1 r_2}$, which we identify with $\Mat(r_2 \times r_1,\ZZ)^n.$ To a linked tableaux pair $T^\pm$, we can associate an element of the lattice,
$v_{T^\pm}$, which is the exponent vector of $\Mon(T^+).$ 
\begin{lem} Let $T^\pm_1$ and $T^\pm_2$ be two semi-standard linked pairs. If $v_{T^\pm_1}=v_{T^\pm_2}$, then $T^\pm_1=T^\pm_2.$
\end{lem}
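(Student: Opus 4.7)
The plan is to read off the row contents of both $T^+$ and $T^-$ directly from the exponent vector $v_{T^\pm}$, and then to use semi-standardness to reconstruct each tableau uniquely.

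First, I would unpack what $v_{T^\pm}$ encodes. By definition, the $(i,k,j)$-coordinate of $v_{T^\pm}$ is the exponent of $x^i_{kj}$ in $\Mon(T^+)$. From the formula
\[\Mon(T^+)=\prod_{\text{label } ij \text{ in the } k^{\text{th}} \text{ row of } T^+} x^i_{kj},\]
this exponent equals the multiplicity of the label $ij$ in the $k^{\text{th}}$ row of $T^+$. Using the link condition, which guarantees $\Mon(T^+)=\Mon(T^-)$, the same coordinate also equals the multiplicity of the label $ik$ in the $j^{\text{th}}$ row of $T^-$. Hence the hypothesis $v_{T^\pm_1}=v_{T^\pm_2}$ means that $T^+_1$ and $T^+_2$ agree on the multiset of labels in every row, and likewise that $T^-_1$ and $T^-_2$ do.

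Second, I would invoke semi-standardness. Under the lex order on double labels, each row of a semi-standard tableau is weakly increasing, so the row is determined as an ordered sequence by the multiset of its entries. Applying this row by row recovers $T^+_1=T^+_2$ and $T^-_1=T^-_2$.

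I do not anticipate any real obstacle; the lemma is essentially a bookkeeping check that unpacks the definitions of $\Mon$ and of semi-standardness. The one subtlety worth noting is that $v_{T^\pm}$ depends only on the underlying pair $(T^+,T^-)$ and not on the particular bijection $\sigma$ realising the link — so the statement is really about equality of the tableau pair, which is exactly what the subsequent cone and finite-generation arguments require.
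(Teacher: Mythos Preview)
Your proposal is correct and follows exactly the same approach as the paper: recover the multiset of labels in each row of $T^+$ and $T^-$ from the exponent vector, then use that semi-standard rows are weakly increasing to determine the tableaux uniquely. Your write-up is simply a more detailed unpacking of the paper's two-sentence argument, and your remark that the link bijection $\sigma$ itself need not be uniquely determined is a correct and useful observation.
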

\begin{proof}
From $v_{T^\pm_1}$, the monomial is determined, and in particular the labels contained in each row of $T^\pm_2.$ However, since the rows of $T^\pm_2$ are weakly increasing, this in fact determines the tableaux. 
\end{proof}
\begin{mydef} For a Kronecker moduli space $\K$, we define the cone
\[C^n_{r_1 r_2}:=\Cone(v_{T^\pm}: T^\pm \text{ is a semi-standard pair}).\]
\end{mydef}
The semi-invariant ring is naturally graded by $\ZZ \cong \{\alpha \in \chi(G): \langle \alpha,\lambda_0 \rangle=0\},$ where the identification is induced by the identification $\chi(G) \cong \ZZ^2$ and is given by
\[k \mapsto {k}{\gcd(r_1,r_2)}(-r_2,r_1) \in \chi(G).\]
This gives a height function on $C^n_{r_1 r_2}$, which can also be written as 
\[(v^i_{jk}) \mapsto \frac{1}{\lcm(r_1,r_2)} \sum_{i,j,k} v^i_{jk}.\]
Unless otherwise stated, by height we always mean this height function. 
\begin{mydef} We define the polytope (which may not be a lattice polytope) $P^n_{r_1 r_2}(k)$ to be the height $k$ slice of $C^n_{r_1 r_2}$. 
\end{mydef}

\begin{thm}\label{thm:finite} The cone $C^n_{r_1 r_2}$ is a finitely generated cone, and 
\[C^n_{r_1 r_2} \cap M=\{v_{T^\pm}: T^\pm \text{ is a semi-standard pair}\}\]
\end{thm}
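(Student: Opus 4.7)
The plan is to derive both claims from (i) the closure of the monoid of semi-standard exponent vectors under addition, (ii) a combinatorial finiteness result for primitive pairs, and (iii) a saturation argument. To begin, I would use the theorem of \cite{HeubergerKalashnikov} cited above to identify the set $\mathcal{S} := \{v_{T^\pm} : T^\pm \text{ semi-standard}\}$ with the exponent vectors of a monomial $\CC$-basis of the initial algebra $\ini_<(R)$ of the semi-invariant ring $R = \SI^G(Q^n,\br)$. Since $\ini_<(R)$ is a subalgebra of $\CC[x_{jk}^i]$ and the product of two basis monomials is again a single monomial lying in $\ini_<(R)$, the basis is closed under multiplication. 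Equivalently, $\mathcal{S}$ is an affine submonoid of $M$. By Definition \ref{def:primitive}, whenever $T^\pm$ fails to be primitive we have $v_{T^\pm} = v_{T_1^\pm} + v_{T_2^\pm}$ for two semi-standard pairs of strictly smaller width; inducting on width gives that $\mathcal{S}$ is generated as a monoid by the vectors $v_{T^\pm}$ of primitive pairs.

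The main step is then to show that there are only finitely many primitive semi-standard pairs, and I expect this to be the principal obstacle. The natural strategy is to argue that any semi-standard pair of sufficiently large width must admit a proper semi-standard linked sub-pair, forcing non-primitivity. One likely approach is to encode a semi-standard pair by its ordered sequence of column pairs and invoke a Dickson- or well-quasi-order-type argument to locate a proper subset of columns that closes up into a semi-standard sub-pair (with the induced link). Once finiteness is established, $\mathcal{S}$ admits a finite monoid generating set, and hence its positive real span $C^n_{r_1 r_2}$ is a finitely generated rational polyhedral cone.

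Finally, for the saturation identity $C^n_{r_1 r_2} \cap M = \mathcal{S}$, the inclusion $\supseteq$ is immediate from the definition. For $\subseteq$, given a lattice point $v$ in this rational cone, write $v = \sum c_i v_{T_i^\pm}$ with $c_i \in \QQ_{\geq 0}$ and clear denominators to obtain $Nv = \sum n_i v_{T_i^\pm}$ with $N, n_i \in \NN$. By monoid closure, $Nv = v_{T^\pm}$ for some semi-standard $T^\pm$. All label multiplicities in $T^\pm$ are then divisible by $N$; in particular the widths $\alpha_2$ and $\alpha_1$ are divisible by $N$. Taking every $N$-th column of $T^+$ and of $T^-$ (indexed by $N, 2N, \ldots$) produces a pair $T_0^\pm$ of shapes $r_2 \times (\alpha_2/N)$ and $r_1 \times (\alpha_1/N)$ that is semi-standard, since a subset of columns of a semi-standard tableau remains semi-standard, and whose link multiplicities are inherited from those of $T^\pm$. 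By construction, $v_{T_0^\pm} = v$, completing the proof.
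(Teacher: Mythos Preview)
Your overall architecture is backwards relative to the paper, and the step you flag as the ``principal obstacle'' is indeed a genuine gap that your sketch does not close. The paper does \emph{not} first prove that there are finitely many primitive pairs and then deduce polyhedrality; it does the opposite. It writes down an explicit finite list of linear inequalities on $v=(v^i_{jk})$---nonnegativity, the row- and column-sum equalities \eqref{eq:sum1}--\eqref{eq:sum2}, and the semi-standard inequalities \eqref{eq:inequalities} (plus their $T^-$ analogues)---and observes that a lattice point satisfies all of them if and only if the unique row-weakly-increasing pair with those multiplicities is semi-standard. Polyhedrality and the lattice-point identity then fall out simultaneously, and finiteness of primitive pairs is a \emph{corollary} via Gordan's Lemma. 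Your proposed Dickson/WQO argument, by contrast, is only gestured at, and there is a concrete obstruction: taking a subset of columns of $T^+$ and a subset of columns of $T^-$ does not in general produce a \emph{linked} sub-pair, because the link is governed by the row-by-row label multiplicities (equivalently by the monomial), not by any column-wise correspondence. So ``locating a proper subset of columns that closes up into a semi-standard sub-pair with the induced link'' is not a well-posed operation. More abstractly, knowing only that $\mathcal{S}$ is a submonoid of $\NN^{nr_1r_2}$ is not enough---submonoids of $\NN^N$ need not be finitely generated---and you cannot appeal to finite generation of $\ini_<(R)$ without already knowing a finite SAGBI basis exists, which is what you are trying to prove.

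Your saturation argument (take every $N$-th column after clearing denominators) is correct and is a nice alternative to the paper's direct verification: because each row is weakly increasing and each label multiplicity is divisible by $N$, the columns indexed by multiples of $N$ really do carry exactly $v^i_{jk}$ copies of each label, and the resulting $T_0^+$, $T_0^-$ share the same monomial and hence are automatically linked. But this only helps once you already know the cone is polyhedral; without the explicit half-space description (or some replacement for your missing finiteness step), the argument does not get off the ground.
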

\begin{proof} We prove the theorem by showing that $C^n_{r_1 r_2}$ is the intersection of a finite number of half spaces of the form 
\[H_y:=\{x \in M: y(x) \geq 0\}\]
for some $y \in M^\vee.$ 

We first note that every $v_{T^\pm}$ has non-negative entries; this condition can clearly be ensured using half spaces. Consider a lattice vector $v=(v^i_{jk})$ in the positive orthant. If $v=v_{T^\pm}$ for some $T^\pm$, then for each $j_1$ and $j_2$ between $1,\dots,r_2,$
\begin{equation}\label{eq:sum1} \sum_{i,k} v^i_{j_1 k}=\sum_{i,k} v^i_{j_2 k},\end{equation}
and for each $k_1$ and $k_2$ between $1,\dots,r_1,$
\begin{equation}\label{eq:sum2} \sum_{i,j} v^i_{j k_1}=\sum_{i,j} v^i_{j k_2}.\end{equation}
These constraints can can be ensured using half spaces (equality is achieved by taking both $H_y$ and $H_{-y}$ for appropriate $y \in M^\vee$).

We now consider any $v$ satisfying all the constraints mentioned so far. To such a $v$ we associate a linked tableaux pair $T^\pm$, that both have weakly increasing rows. The $j^{th}$ row of $T^+$ has $v^i_{jk}$ labels of the form $ik$, and the $k^{th}$ row of $T^-$ has $v^i_{jk}$ labels $ij.$ There is clearly only one such linked pair. We will now give hyperplane conditions that exactly ensure that $T^\pm$ are both semi-standard. 

To complete the proof, we show the following claim, and its analogue for $T^-$: $T^+$ is semi-standard if, for every two consecutive rows in $T^+$, say row $j$ and $j+1$, and every choice of $i,k$,
\begin{equation} \label{eq:inequalities} \sum_{sk \leq ik-1} v^s_{j l} \geq \sum_{sk \leq ik} v^s_{j l}.\end{equation}

The claim holds for the following reason. Consider the last label $ik$ in the $j+1$ row. This is the $\sum_{sk \leq ik} v^s_{j l}$ entry in this row. Then if $T^+$ is semi-standard, the label above $ik$ is at most $i k-1$. This means that the number of labels in row $j$ of form $sk \leq i k-1$ is at least $\sum_{sk \leq ik} v^s_{j l}$; that is, 
\[\sum_{sl \leq ik-1} v^s_{j l} \geq \sum_{sl \leq ik} v^s_{j l}.\]
Thus, every semi-standard tableaux pair satisfies the inequalities \eqref{eq:inequalities}. We conclude that $C^n_{r_1 r_2}$ is the intersection of a finite number of half spaces, and hence is finitely generated by Gordan's Lemma.

We now show the second part of the theorem. Clearly every $v_{T^\pm}$ for a semi-standard pair is an element of $C^n_{r_1 r_2}$.  Conversely, suppose that $v \in C$ is a lattice point. By equations \eqref{eq:sum1}  and \eqref{eq:sum2}, we can construct a linked tableaux pair $T^\pm$ with $v=v_{T^\pm}$ such that both tableaux have weakly increasing rows.  Consider a label $ik$ appearing in row $j+1$ of $T^+$. The inequalities imply if $sl$ is the label above $ik$ in $T^+$, then $sl \leq ik-1<ik$. Thus $T^+$ is semi-standard as required.

Clearly, each of the inequalities \eqref{eq:inequalities} describe a halfspace $H_y$ for suitable $y$, and the same holds for their $T^-$ analogue. \end{proof}
\begin{cor} There are only finitely many primitive semi-standard linked tableaux pairs.
\end{cor}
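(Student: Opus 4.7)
The plan is to translate the combinatorial notion of primitivity into a statement about indecomposable lattice points of the cone $C^n_{r_1r_2}$, and then invoke the finiteness of the Hilbert basis of a finitely generated rational cone.

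First, I would observe that the assignment $T^\pm \mapsto v_{T^\pm}$ sends the multiplicative structure on monomials to the additive structure on the lattice $M$: if $\Mon(T^+)=\Mon(T_1^+)\Mon(T_2^+)$, then $v_{T^\pm}=v_{T_1^\pm}+v_{T_2^\pm}$. Combined with the injectivity lemma (the map $T^\pm \mapsto v_{T^\pm}$ is one-to-one on semi-standard pairs), this means that $T^\pm$ is primitive in the sense of Definition \ref{def:primitive} if and only if $v_{T^\pm}$ cannot be written as a sum $v_1+v_2$ with $v_1,v_2$ nonzero lattice points coming from semi-standard pairs. By Theorem \ref{thm:finite}, the set of such lattice vectors is precisely $C^n_{r_1r_2}\cap M$, so primitivity of $T^\pm$ is equivalent to $v_{T^\pm}$ being indecomposable in the affine monoid $C^n_{r_1r_2}\cap M$.

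Second, I would invoke the standard fact that a finitely generated rational polyhedral cone has a finite Hilbert basis: since Theorem \ref{thm:finite} shows $C^n_{r_1r_2}$ is finitely generated (as a cone cut out by finitely many half-spaces, by Gordan's lemma), the monoid $C^n_{r_1r_2}\cap M$ is a finitely generated affine monoid, and its set of indecomposable elements is finite. The correspondence from the previous step then puts the set of primitive semi-standard linked tableaux pairs in bijection with a subset of this finite Hilbert basis, yielding the corollary.

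I do not expect any serious obstacle beyond carefully checking the bijection between primitive pairs and Hilbert basis elements. The only mild subtlety is the direction of the equivalence that uses Theorem \ref{thm:finite}: one must know that any decomposition $v_{T^\pm}=v_1+v_2$ with $v_i$ nonzero lattice points in $C^n_{r_1r_2}$ actually arises from a decomposition of $T^\pm$ into two semi-standard pairs, and this is exactly what the second half of Theorem \ref{thm:finite} provides. Modulo this observation, the corollary is immediate.
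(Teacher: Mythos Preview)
Your proposal is correct and follows essentially the same approach as the paper: both arguments identify primitive semi-standard pairs with indecomposable elements of the monoid $C^n_{r_1 r_2}\cap M$ and then appeal to Gordan's lemma (finite Hilbert basis) to conclude finiteness. Your write-up is in fact more careful than the paper's in spelling out why the second part of Theorem~\ref{thm:finite} is needed to ensure that any monoid decomposition of $v_{T^\pm}$ comes from semi-standard pairs.
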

\begin{proof} As $C^n_{r_1 r_2}$ is finitely generated, there is a finite set $v_1,\dots,v_k$ of generators. If $T^\pm$ is primitive, then $v_{T^\pm}$ must be a generator, as by definition, we cannot write $v_{T^\pm}$ as a sum of any other two elements in the cone. Therefore, there are only finitely many primitive tableaux pairs. 
\end{proof}
\begin{rem} In \cite{HeubergerKalashnikov}, we proved combinatorially -- and laboriously -- the above corollary in the $r_1=r_2=2$ case. Unlike the above corollary, however, this gives an explicit description of the SAGBI basis. 
\end{rem}
\begin{eg} For non-trivial examples, see \S \ref{sec:examples}. The simplest example, however, of $C^n_{r_1 r_2}$, is when either $r_1=1$ or $r_2=1$. The cases are isomorphic, we so can consider the cone $C^n_{1 r}$, and its slice at height one, $P_{r,n}:=P^n_{1 r}(1)$. In this case, $C^n_{1 r}$ is generated at height one, and $P_{r,n}$  is the Gelfand--Cetlin polytope (or isomorphic to it, depending on your description). In \cite{haradakaveh}, they show that there the integrable system can be obtained by composing the map they produce from  $\Gr(r,n) \to X_{r,n}$ with the integrable system on $X_{r,n}$ using its toric structure. As, by definition, $X_{r,n}$ is $\Proj$ of the semigroup algebra obtained from $C^n_{1 r}$, the image of its integrable system is naturally seen as $P_{r,n}$. 
\end{eg}

\begin{cor} The finite set $\{f_{T^\pm}: T \text{ primitive semi-standard tableaux pair}\}$ is a SAGBI basis for the Cox ring of $\K$, when $\gcd(r_1,r_2)=1.$
\end{cor}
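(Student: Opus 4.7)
The plan is to reduce the corollary to a monoid-algebra identification and then invoke Theorem \ref{thm:finite}. Since $\gcd(r_1,r_2)=1$, the earlier proposition identifies $\SI^G(Q^n,\br) \cong \Cox(\K)$, so it is enough to prove that the finite set in question is a SAGBI basis for the semi-invariant ring. Writing $A = \SI^G(Q^n,\br)$, I must verify
\[
\ini_<(A) \;=\; \CC\bigl[\ini_<(f_{T^\pm}): T^\pm \text{ primitive semi-standard}\bigr] \;=\; \CC\bigl[\Mon(T^+): T^\pm \text{ primitive semi-standard}\bigr],
\]
using $\ini_<(f_{T^\pm})=\Mon(T^+)$.

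First, I would identify $\ini_<(A)$ with the monoid algebra $\CC[C^n_{r_1 r_2}\cap M]$. By the HeubergerKalashnikov theorem recalled above, for every nonzero $f\in A$ one has $\LM(f)=\Mon(T^+)$ for some semi-standard linked pair $T^\pm$, and conversely each such $\Mon(T^+)$ equals $\LM(f_{T^\pm})$. Thus $\ini_<(A)$ is the $\CC$-vector space spanned by $\{\Mon(T^+):T^\pm \text{ semi-standard}\}$. Via the exponent map $T^\pm\mapsto v_{T^\pm}$, Theorem \ref{thm:finite} identifies this set bijectively with the lattice points $C^n_{r_1 r_2}\cap M$, which form a submonoid of $M$ because the intersection of a convex cone with a lattice is closed under addition. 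Translating back to monomials, the set $\{\Mon(T^+)\}$ is closed under multiplication, so its $\CC$-linear span coincides with the $\CC$-algebra it generates, giving $\ini_<(A)=\CC[C^n_{r_1 r_2}\cap M]$.

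Next, I would argue that the primitive semi-standard pairs correspond precisely to a monoid-generating set for $C^n_{r_1 r_2}\cap M$. By Definition \ref{def:primitive}, $T^\pm$ is primitive iff $v_{T^\pm}$ is not a sum of two nonzero elements of the submonoid $\{v_{T'^{\pm}}\}=C^n_{r_1r_2}\cap M$. Since the height function is additive under addition of monoid elements and strictly positive on nonzero vectors, a straightforward induction on height shows every $v_{T^\pm}$ decomposes as a sum of primitive $v_{T_i^\pm}$, equivalently $\Mon(T^+)=\prod_i \Mon(T_i^+)$ with each $T_i^\pm$ primitive. Therefore
\[
\CC[C^n_{r_1 r_2}\cap M] \;=\; \CC\bigl[\Mon(T^+): T^\pm \text{ primitive semi-standard}\bigr],
\]
completing the SAGBI property, while finiteness follows from the preceding corollary.

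The subtle step, and the main place where the argument could fail without the preparatory work, is the multiplicative closure used in the first paragraph: a priori the set of leading monomials of semi-invariants need not be closed under products, so the $\CC$-linear span of $\{\Mon(T^+)\}$ might be strictly smaller than the $\CC$-algebra they generate. The content of Theorem \ref{thm:finite} is precisely what rules this out, by exhibiting the leading monomials as the lattice points of a convex cone; any alternative proof would need to replace this with a Buchberger-style lifting argument showing that products of leading monomials remain leading monomials of semi-invariants.
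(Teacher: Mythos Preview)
Your proof is correct and supplies exactly the argument the paper leaves implicit (the corollary is stated there without proof). One minor point: the worry in your last paragraph is unfounded, since for any term order one has $\LM(fg)=\LM(f)\,\LM(g)$, so the set of leading monomials of a subalgebra is automatically multiplicatively closed; Theorem~\ref{thm:finite} is genuinely needed only for the \emph{finiteness} of the primitive set (via the preceding corollary), not for the SAGBI identity itself.
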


SAGBI bases are of interest in part because of their close connection to toric degenerations. We now describe the resulting toric degeneration of $\K$. 

In the next theorem, we show that $\K$ has a toric degeneration to the toric variety associated to the intersection of (scaled) Gelfand--Cetlin polytopes $P_{r_1,nr_2}$ and $P_{r_2,n r_1}$. The lattice polytope $P_{r_1,n r_2}$ lives in the toric lattice $M_1:=\Mat(r_1,n r_2;\ZZ)$, and the lattice polytope $P_{r_2,n r_1}$ lives in the lattice  $M_2:=\Mat(r_2,n r_1;\ZZ)$. We identify both of these lattices with $M =\Mat(r_2,r_1;\ZZ)^{\times n}$, by taking a tuple of matrices in $M$ and either stacking them horizontally (giving an element of $M_2$) or vertically and then transposing (giving an element of $M_1$), so that taking an intersection makes sense. 

\begin{thm}\label{thm:intersection} Assume $\gcd(r_1,r_2)=1$. Then the SAGBI basis given by primitive semi-standard tableaux pair induces a toric degeneration of $\K$ to a normal toric variety $X_F$, where $F$ is is normal fan of 
\[ r_2 P_{r_1,n r_2} \cap r_1 P_{r_2,n r_1}.\]
\end{thm}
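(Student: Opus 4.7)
The plan is to combine the finite SAGBI basis of the previous corollary with Corollary~\ref{cor:toric}, and then to identify the height-one slice $P^n_{r_1 r_2}(1)$ with the claimed intersection of Gelfand--Cetlin polytopes by unpacking the hyperplane description of $C^n_{r_1 r_2}$ given in the proof of Theorem~\ref{thm:finite}.

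First, I will check the hypothesis of Corollary~\ref{cor:toric}. Since $\LM(f_{T^\pm}) = \Mon(T^+)$ has exponent vector $v_{T^\pm}$, the cone over the exponents of the leading monomials of the finite SAGBI basis is generated by the primitive $v_{T^\pm}$; by the definition of primitivity this generates all semi-standard $v_{T^\pm}$, and by Theorem~\ref{thm:finite} this is precisely $C^n_{r_1 r_2}$. Moreover, $C^n_{r_1 r_2} \cap M = \{v_{T^\pm} : T^\pm \text{ semi-standard}\}$ is exactly the set of exponents of monomials in $\ini_<(\SI^G(Q^n,\br))$, since the leading monomials of semi-invariants are precisely the $\Mon(T^+)$ for semi-standard $T^\pm$. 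Hence Corollary~\ref{cor:toric} yields a toric degeneration of $\K$ to $X_F$, where $F$ is the normal fan of $P^n_{r_1 r_2}(1)$.

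It remains to identify $P^n_{r_1 r_2}(1)$ with $r_2 P_{r_1,nr_2} \cap r_1 P_{r_2,nr_1}$. The key observation is that the facet inequalities of $C^n_{r_1 r_2}$ from the proof of Theorem~\ref{thm:finite} decouple into constraints on $T^+$ alone (non-negativity, the row-length equality \eqref{eq:sum1}, and the semi-standardness inequalities \eqref{eq:inequalities}) and the analogous constraints on $T^-$. Under the lex identification $(i,k) \mapsto (i-1)r_1 + k$ of the double labels of $T^+$ with $\{1,\dots,nr_1\}$ and the induced lattice isomorphism $\pi_+$ on $M$, the $T^+$-constraints are precisely the defining inequalities of the cone over $P_{r_2,nr_1}$; similarly for $T^-$ under $\pi_-$ with $P_{r_1,nr_2}$. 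Thus $C^n_{r_1 r_2}$ is the intersection in $M_\RR$ of these two Gelfand--Cetlin cones. At height $k=1$ the weight is $\alpha = (-r_2,r_1)$, so $T^+$ has shape $r_2 \times r_1$ as an SSYT on $\{1,\dots,nr_1\}$, i.e.\ Pl\"ucker degree $r_1$, placing it in $r_1 P_{r_2,nr_1}$; analogously $T^-$ lies in $r_2 P_{r_1,nr_2}$. Intersecting yields the desired identification.

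The main obstacle I expect is bookkeeping: making sure that the two Gelfand--Cetlin polytopes are compared in the common ambient $M_\RR$ via the reindexings $\pi_\pm$, and that the scaling factors $r_1, r_2$ end up on the correct sides. Once this is verified, no additional mathematical input beyond Theorem~\ref{thm:finite} and Corollary~\ref{cor:toric} (both of which are available under $\gcd(r_1,r_2)=1$) should be required.
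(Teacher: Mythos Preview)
Your proposal is correct and follows essentially the same approach as the paper: both arguments reduce to identifying $C^n_{r_1 r_2}$ with the intersection of the two Grassmannian Gelfand--Cetlin cones and then invoking Corollary~\ref{cor:toric}. The only cosmetic difference is that you establish the cone identity via the halfspace description from the proof of Theorem~\ref{thm:finite} (the inequalities decouple into $T^+$ and $T^-$ constraints), whereas the paper argues directly on lattice points (a monomial lies in both Grassmannian cones iff it is $\Mon(T^+)=\Mon(T^-)$ for semi-standard $T^+,T^-$, and such a pair is automatically linked).
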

\begin{proof}
We  claim that 
\[P^n_{r_1 r_2}:=r_2 P_{r_1,n r2} \cap r_1 P_{r_2,n r_1}.\]
Note that $C^{n r_2}_{1 r_1}$ is the cone over $P_{r_1,n r2}$ and  $C^{n r_1}_{1 r_2}$ is the cone over $P_{r_2,n r_1}$. The claim then follows if we can show that
\[C^n_{r_1 r_2} = C^{n r_1}_{1 r_2} \cap C^{n r_2}_{1 r_1}.\]
A lattice point is an element of the right hand side if and only it is the exponent vector of a monomial $m$ that can be represented at $\Mon(T^+)$ and $\Mon(T^-)$ for two semi-standard tableau $T^+$ and $T^-$. These tableaux are then automatically linked (using the monomial), and hence the exponent vector is an element of $C^n_{r_1 r_2}$. Therefore,
\[P^n_{r_1 r_2}=r_2 P_{r_1,n r2} \cap r_1 P_{r_2,n r_1}\]
as desired.  The rest of the claim follows from Corollary \ref{cor:toric}. 
\end{proof}
In general, $P^n_{r_1,r_2}$ is not a lattice polytope, though there is some smallest positive integer $k$ such that  $k P^n_{r_1,r_2}$ is a lattice polytope. This corresponds to the fact that the SAGBI basis is not generated in degree 1. It would be interesting to determine bounds on $k$. 
\begin{mydef} We call this toric degeneration the \emph{Gelfand--Cetlin toric degeneration} of $\K$. 
\end{mydef}
Theorem \ref{thm:intersection} shows that, given the Gelfand--Cetlin toric degenerations of two Grassmannians, $\Gr(r_1,nr_2)$ and $\Gr(r_2,n r_1)$, we can intersect the polytopes of each to produce a toric degeneration of a Kronecker moduli space. Toric degenerations of Grassmannians are well-studied with considerable combinatorial structure, so a natural question is the extent to which this can be generalized to other degenerations.  One key aspect is that the polytopes associated to the degenerations live in $M$. In the next section, we give conditions for when the intersection construction above can be generalized to toric degenerations coming from matching fields. 
\subsection{Applications to mirror symmetry}
One application of toric degenerations is in finding \emph{mirrors} to Fano varieties. We will discuss this application in examples in the last section. For some background on mirror symmetry for Fano varieties, see \cite{fanomanifolds, CoatesCortiGalkinKasprzyk2016}. The mirror to a deformation class of $n$-dimensional smooth Fano varieties, also known as its (weak) \emph{Landau--Ginzburg model}, is a  mutation class of certain Laurent polynomials in $n$ variables. The Laurent polynomials that are mirror to smooth Fano varieties are, conjecturally, \emph{rigid maximally mutable} Laurent polynomials \cite{Coates_2021}. 

To check that representatives $X$ and $f$ belong to mirror classes, one shows that two power series coincide. The power series associated to a Fano variety $X$ is the \emph{quantum period}, which is built out of genus zero Gromov--Witten invariants, and hence deformation invariant. The power series associated to the Laurent polynomial $f$ is called the \emph{classical period}; it is easy to compute and is mutation invariant. See \cite{CoatesCortiGalkinKasprzyk2016} for a definition of both periods. Checking that the two periods coincide can be very difficult, as it is often hard or impossible to find a closed formula for the quantum period of a Fano variety. For Fano toric complete intersections, this is a consequence of the celebrated mirror theorem \cite{givental, lian}.  

The Newton polytope $P$ of a Laurent polynomial $f$ which is a \emph{rigid maximally-mutable} Laurent polynomial \cite{Coates_2021} is a Fano polytope, i.e. $P$ spans the fan of a (singular) toric Fano variety $X_P$. Part of the mirror symmetry conjectures is that $X_P$ can smooth to a Fano variety $X$ which is mirror to $f$. Thus, if we want to find a conjectural mirror to a Fano variety $X$, we find a toric degeneration of $X$ to some $X_P$. This determines the monomials of $f$, and coefficients are then chosen to ensure that $f$ is rigid maximally mutable. When $P$ is terminal (i.e. the only lattice points of $P$ are the origin and the vertices, see \cite{kasprzyk2012fano}), the associated Laurent polynomial is just the one with coefficient $1$ assigned to each vertex.  

\section{Matching Fields for Kronecker quiver moduli} \label{sec:matching}
\subsection{Matching fields for Grassmannians}
Matching fields and associated toric degenerations have been studied by many authors \cite{mohammadi1, mohammadi2,mohammadi3, Higashitani}. We give a brief introduction here.

Consider the Grassmannian $\Gr(r,n)$. Its Cox ring is generated by Pl\"ucker coordinates, $p_{I}$ where $I=\{i_1,\dots,i_r\}$ is a size $r$ subset of  $\{1,\dots,n\}$. Then $p_I$ is a signed sum of monomials of the form 
\[x_{i_1 \sigma(1)} \cdots x_{i_r \sigma(r)}\]
ranging over permutation $\sigma \in \Sym(r)$, where $[x_{ij}]$ are the coordinates on an $r \times n$ matrix. 

The Gelfand--Cetlin toric degeneration picks out the monomial corresponding to the identity permutation for each $p_I$. A \emph{matching field} for $\Gr(r,n)$ is a map $\Lambda$
\[\Lambda: \{ I \subset \{1,\dots,n\} | |I|=r\} \to \Sym(r), I \mapsto \sigma_I\]
One can think of this as picking out a monomial out of the polynomial of each Pl\"ucker coordinate:
\[p_I \mapsto x_{i_1 \sigma_I(1)} \cdots x_{i_r \sigma_I(r)}=m_{\sigma_I}.\]
The \emph{matching field polytope} $P_\Lambda$ is the convex hull of the exponent vectors of the $m_{\sigma_I}$, as $I$ ranges over all size $r$ subsets. 

If there is a grading on $\CC[x_{ij}]$ such that the chosen monomial is the unique leading term of $p_I$ for each $I$, then this matching field is called \emph{coherent}. 

Coherent matching fields are of most interest when they define toric degenerations. That is, if the $p_I$ are a SAGBI basis for the Cox ring of the Grassmannian under the grading, then there is a toric degeneration of the Grassmannian to the toric variety $X_{F_\Lambda}$ where $F_\Lambda$ is the normal fan to the matching field polytope $P_\Lambda$. 

In increasing degrees of generality, the papers cited above have identified families of matching fields that produce toric degenerations of $\Gr(r,n)$. One such family is the \emph{2-block diagonal} matching fields. Since we only consider $2$-block matching fields, we simply call them block diagonal matching fields. 
 We recall the definition here.
\begin{mydef}[\cite{mohammadi2}] Define a matching field $\Lambda_b$ for $\Gr(r,n)$ for each $0 \leq b \leq n$. The definition of $\Lambda_b$ is given by
\[\Lambda_b(J):=
\begin{cases} 
    (12) & |J \cap \{1,\dots,b\}| =1 \\
    \id & \text{otherwise} 
       \end{cases}\]
       where $J$ ranges over size $r$ subsets of $\{1,\dots,n\}$. 
\end{mydef}
Note that if $b=0$, this is the diagonal matching field which gives rise to the Gelfand--Cetlin toric degeneration. 
\begin{thm}[\cite{mohammadi2}]
The block diagonal matching field $\Lambda_b$  defines a toric degeneration of $\Gr(r,n)$ for any $b$. 
\end{thm}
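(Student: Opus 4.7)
The plan is to establish the theorem in three steps: construct an explicit weight on $\CC[x_{ij}]$ realizing the matching field $\Lambda_b$ (thereby proving coherence), verify that the Plücker generators form a SAGBI basis under this weight, and then invoke Corollary \ref{cor:toric} to extract the toric degeneration. Concretely, I would first define a weight $w_b$ on $\CC[x_{ij}]$ that assigns to $x_{ij}$ a pair (or a single integer obtained by lexicographic encoding of a pair) chosen so that on columns $j \in \{b+1,\dots,n\}$ the identity permutation in $\Sym(r)$ gives the minimum-weight diagonal, while on columns $j \in \{1,\dots,b\}$ the entries in rows $1$ and $2$ are swapped in weight, so that whenever a Plücker index set $I$ meets $\{1,\dots,b\}$ in exactly one column, the transposition $(1\,2)$ wins. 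Case analysis on $|I\cap\{1,\dots,b\}|$ then shows that $\LM_{w_b}(p_I) = m_{\sigma_I}$ with $\sigma_I = \Lambda_b(I)$; the delicate point here is ensuring that when $|I\cap\{1,\dots,b\}|\ge 2$ the identity permutation still wins, which constrains the relative magnitudes of the two parts of the weight.

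The core of the argument is the SAGBI verification, which I would approach via Khovanskii's subduction criterion applied to the quadratic three-term Plücker relations: for every such relation $p_I p_J - p_{I'} p_{J'} + p_{I''} p_{J''} = 0$, one must check that the $w_b$-initial form $\ini_{w_b}$ either vanishes or the resulting binomial of leading monomials subduces to zero in $\CC[m_{\sigma_I}]$. A combinatorial reduction organizes the relations by the distribution of $I,J$-indices across the blocks $\{1,\dots,b\}$ and $\{b+1,\dots,n\}$; within each block the verification reduces to the Gelfand--Cetlin (diagonal) case, which is classical, and the cross-block cases are handled by tracking how the $(1\,2)$ swap interacts with the standard swap-of-two-rows identity underlying the three-term relation. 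An alternative route, which I view as a useful cross-check, is to match Hilbert series: the matching field polytope $P_{\Lambda_b}$ and the Gelfand--Cetlin polytope have the same normalized volume (they differ by a unimodular transformation on each block), so the toric candidate algebra $\CC[m_{\sigma_I}]$ and $\ini_{w_b}(\CC[\Gr(r,n)])$ have coincident Hilbert series, forcing the former to equal the latter.

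Once the SAGBI property is established, Corollary \ref{cor:toric} applied to $R=\CC[\Gr(r,n)]$ with the grading induced by $w_b$ yields a flat degeneration of $\Gr(r,n)$ to $\Proj(\ini_{w_b}(R))$, which is the normal toric variety $X_{F_{\Lambda_b}}$ associated to the normal fan of $P_{\Lambda_b}$; normality follows from the saturation hypothesis in that corollary, which is verified for $P_{\Lambda_b}$ by observing that each lattice point of the cone is the exponent vector of a product of the $m_{\sigma_I}$, a check that again reduces block-by-block to the known Gelfand--Cetlin case. The main obstacle, as in most matching field arguments, is the SAGBI verification in the middle step: coherence is essentially a bookkeeping exercise and the toric conclusion is automatic, but controlling the interaction between the block-swap and the three-term Plücker relations is where real work is required.
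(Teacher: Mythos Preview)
The paper does not prove this theorem at all: it is quoted verbatim as a result of \cite{mohammadi2} and used as a black box (specifically, it is invoked only to verify the first hypothesis of Theorem~\ref{thm:MF} in the examples of \S\ref{sec:examples}). So there is nothing to compare your proposal against in this paper.

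That said, your outline is broadly the shape of the argument in the cited literature: exhibit an explicit weight making $\Lambda_b$ coherent, then show the Pl\"ucker coordinates form a SAGBI basis, then read off the degeneration. A few cautions. First, your ``unimodular transformation on each block'' heuristic for matching Hilbert functions is not literally how the cited papers proceed; the matching field polytope $P_{\Lambda_b}$ is not in general obtained from the Gelfand--Cetlin polytope by a block-diagonal lattice automorphism, and the equality of Ehrhart data (or equivalently the SAGBI property) is established by a direct combinatorial analysis of the binomial ideal of the matching field, not by a polytope isomorphism. Second, the subduction check you describe is genuinely the heart of the matter, but reducing ``within each block'' to the diagonal case and handling ``cross-block'' cases separately is more delicate than you suggest: the three-term Pl\"ucker relations do not respect the block partition, so the case analysis has to track which of the three pairs $(I,J),(I',J'),(I'',J'')$ cross the block boundary and in what pattern. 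This is exactly where the work in \cite{mohammadi2} lies, and your sketch does not yet contain the mechanism that makes it go through. If you intend to reproduce the proof rather than cite it, that is the step to flesh out.
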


\subsection{Generalizing to Kronecker moduli spaces}
Much like the Grassmannian, the semi-invariants $f_{T^\pm}$ of $\K$ are signed sums over (products of) symmetric groups.   It is therefore natural to extend the definition of matching fields from Grassmannians to Kronecker moduli space as a map
\[\{\text{linked pairs } T^\pm \text{ of weight } (-a r_2, a r_1) \} \mapsto W^\alpha=\Sym(r_1)^{a r_2} \times \Sym(r_2)^{a r_1},\]
for all $a$. 

Again, just as for Grassmannians, we can define \emph{coherent} matching fields for Kronecker quivers to be those where there exists a grading on $\CC[x^i_{jk}]$ such that, for each pair $T^\pm$, there is a unique monomial with maximal grading, and this monomial is the one associated to the element of $W^\alpha$ assigned by the matching field. 

Fix a Kronecker quiver $\K$, and a grading on $\CC[x^i_{jk}]$ given by a tuple of matrices of non-negative integers $[c^i_{jk}] \in M$. Note that this grading gives a grading on the coordinates of  $\Gr(r_1,nr_2)$ and $\Gr(r_2,nr_1)$. We'll be interested in the situation when $[c^i_{jk}]$ induces matching fields simultaneously on $\Gr(r_1,n r_2)$, $\Gr(r_2, n r_1)$ and $\K$, which we'll denote $\Lambda_1,$ $\Lambda_2$ and $\Lambda$ respectively.

Then the following is a natural question: is there a subset of linked tableaux pairs $T^\pm$ such that the semi-invariants $f_{T^\pm}$ are a finite SAGBI basis under this grading? In the following theorem, we give sufficient conditions for this to hold.    

\begin{thm} \label{thm:MF} Let  $[c^i_{jk}] \in M$ be a grading giving matching fields $\Lambda_1,$ $\Lambda_2$ and $\Lambda$ on $\Gr(r_1,nr_2)$, $\Gr(r_2,nr_1)$, and $\K$ respectively.   Suppose the following two conditions hold:
\begin{enumerate}
\item $\Lambda_1$ and $\Lambda_2$ induce toric degenerations on $\Gr(r_1,n r_2)$ and $\Gr(r_2, nr_1)$. 
\item Let $C_i$ be the cone over the matching field polytope $P_{\Lambda_i}$. Let $v_1,\dots,v_l$ be lattice generators for $C_1 \cap C_2$, and suppose that for each $i$ there exists some semi-invariant $f_i$ of $\K$ such that  the exponent vector of $\LM(f_i)$ is $v_i$.
\end{enumerate}
Then there is a toric degeneration of $\K$ to the normal toric variety $X_F$, where $F$ is the normal fan to the polytope 
\[ r_2 P_{\Lambda_1} \cap r_1 P_{\Lambda_2}.\]
\end{thm}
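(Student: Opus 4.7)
The plan is to follow the proof of Theorem \ref{thm:intersection}, replacing the diagonal term order with the grading $[c^i_{jk}]$ and replacing the Gelfand--Cetlin cones with the matching field cones $C_1, C_2$. Concretely, define
\[C^{MF} := \Cone\bigl( \Exp(\LM(f)) : f \in \SI^G(Q^n,\br) \bigr) \subset M_\RR,\]
equipped with the height function induced by the $\K$-grading (which, as in the Gelfand--Cetlin case, sends $(v^i_{jk}) \mapsto (\sum v^i_{jk})/(r_1 r_2)$). The goal is to identify $C^{MF}\cap M$ with $C_1 \cap C_2 \cap M$, conclude that $\{f_1, \dots, f_l\}$ is a finite SAGBI basis for $\SI^G(Q^n,\br)$ under this grading, and then apply Corollary \ref{cor:toric}.

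For the inclusion $C^{MF} \cap M \subseteq C_1 \cap C_2 \cap M$, use condition (1). Stacking $A_1, \dots, A_n$ horizontally identifies a semi-invariant $f$ of weight $(-ar_2, ar_1)$ with a $\GL(r_2)$-semi-invariant on $\Mat(r_2, nr_1)$ which, by the first fundamental theorem of invariant theory, is a polynomial of degree $ar_1$ in Pl\"ucker coordinates of $\Gr(r_2, nr_1)$; stacking vertically (and transposing) identifies it with a degree-$ar_2$ polynomial in Pl\"ucker coordinates of $\Gr(r_1, nr_2)$. Because $\Lambda_1, \Lambda_2$ induce toric degenerations, the Pl\"ucker coordinates form SAGBI bases for their respective Grassmannians under the grading $[c^i_{jk}]$, so $\LM(f)$ is a product of leading Pl\"ucker monomials in each expansion. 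This places $\Exp(\LM(f))$ in $ar_2 \cdot P_{\Lambda_1} \subset C_1$ and simultaneously in $ar_1 \cdot P_{\Lambda_2} \subset C_2$.

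The reverse inclusion comes from condition (2): taking products of the $f_i$ realizes every non-negative integer combination $\sum n_i v_i$ as the leading exponent vector of some semi-invariant of $\K$, and by hypothesis these combinations exhaust $C_1 \cap C_2 \cap M$. Together these inclusions show that the monoid of leading exponents of semi-invariants equals $C_1 \cap C_2 \cap M$ and is generated as a monoid by $v_1, \dots, v_l$. Hence $\{f_1, \dots, f_l\}$ is a finite SAGBI basis and the hypothesis of Corollary \ref{cor:toric} is satisfied. Applying that corollary, $\Proj(\ini_<(\SI^G))$ is the normal toric variety $X_F$ associated to the $\K$-height-one slice of $C_1 \cap C_2$. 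Since this height equals $1/r_2$ times the $C_1$-height and $1/r_1$ times the $C_2$-height of the same vector, the slice is exactly $r_2 P_{\Lambda_1} \cap r_1 P_{\Lambda_2}$, as claimed.

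The main obstacle is really condition (2) itself: it asks both that one can describe the Hilbert basis of $C_1 \cap C_2 \cap M$ and that each generator is realized as the leading term of an actual semi-invariant, rather than merely as a formal product of compatible leading Pl\"ucker monomials from the two Grassmannians. The argument above shows that once this hypothesis is in place, the toric-degeneration machinery of \S \ref{sec:review} produces the claimed degeneration; verifying condition (2) in concrete cases is the focus of the examples in \S \ref{sec:examples}.
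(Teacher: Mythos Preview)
Your argument is correct and follows essentially the same route as the paper: both proofs show that leading exponents of semi-invariants lie in $C_1\cap C_2$ because a semi-invariant belongs to the Cox ring of each Grassmannian (where the Pl\"ucker coordinates are a SAGBI basis by condition~(1)), obtain the reverse inclusion from condition~(2) by taking monomials in the $f_i$, and then invoke Corollary~\ref{cor:toric}. Your version adds a bit more detail (the explicit appeal to the first fundamental theorem, and the height bookkeeping identifying the slice with $r_2 P_{\Lambda_1}\cap r_1 P_{\Lambda_2}$), but the structure is the same.
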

\begin{proof}
Suppose we have a grading satisfying the conditions of the theorem. Then as the Pl\"ucker coordinates are a SAGBI basis for the coordinate ring of each Grassmannian, the cone $C_1$ is the cone associated to  the algebra generated by $\{\LM(f): f \in \Cox(\Gr(r_1,n r_2))\},$ and the cone $C_2$ is the cone associated to  the algebra generated by $\{\LM(f): f \in \Cox(\Gr(r_2,n r_1))\}.$ Note that $C_1$ and $C_2$ are polyhedral cones, and hence so is $C_1 \cap C_2$, so in particular it is finitely generated. So a finite set $v_1, \dots, v_l$ as mentioned in the theorem statement exists. 

Our aim is now to show that there is a finite SAGBI basis of the Kronecker quiver under this grading. Suppose first $f$ is a semi-invariant of $\K$. Then $f$ is an element of the Cox ring of both Grassmannians, and therefore the exponent vector of $\LM(f)$ is in $C_1 \cap C_2.$ Conversely, suppose we have a lattice element $v \in C_1 \cap C_2$. Then we can write 
\[v=\sum a_i v_i.\]
Set $f=\prod f_i^{a_i}$. Then the exponent of the leading monomial of $f$ is $v$. We conclude that the lattice points of $C_1 \cap C_2$ are 
 exactly the exponent vectors of the leading monomials of semi-invariants of $\K$, and $f_1,\dots,f_l$ is a SAGBI basis for $\K$ with this grading. The remainder of the statement of the theorem follows from Corollary \ref{cor:toric}.
\end{proof}
\subsection{Applying Theorem \ref{thm:MF}: canonical(ish) tableaux}
We now explain how to use the theorem in practice; we'll apply this method in several examples in \S \ref{sec:examples}. The main point is that the construction of semi-invariants from double tableaux lends itself well to  checking the second condition in the theorem. 

Recall that in the Gelfand--Cetlin set up, for each element $v \in C_1 \cap C_2$, although there are many linked pairs $T^\pm$ such that $\Mon(T^\pm)$ has exponent $v$, there is a unique semi-standard linked pair giving this condition. Moreover, $\LM(f_{T^\pm})=\Mon(T^\pm)$. One major difference for matching field tableaux (linked or even just in the Grassmannian context) is that there is not, in general, a notion of ``semi-standard'' or canonical.

The desired notion  of canonical tableaux for a Grassmannian $\Gr(r,n)$ together with a coherent matching field would be a combinatorial condition on tableaux such that:
\begin{enumerate}
\item There is a unique canonical tableau with $\Mon(T)=m$ for every $m \in \{\LM(f): f \in \CC[\Gr(r,n)]\}$. 
\item For each column $T'$ in a canonical tableau $T$, the associated Pl\"ucker coordinate has leading monomial $\Mon(T')$. 
\end{enumerate}
Note that defining canonical tableaux for Grassmannians implies a construction for Kronecker moduli. Given a coherent matching field for $\K$, if we have defined canonical tableaux for the associated coherent matching fields of $\Gr(r_1,n r_2)$ and $\Gr(r_2,nr_1)$ then we define a canonical linked pair $(T^\pm)$ to be a linked pair such that both $T^+$ and $T^-$ are canonical. Then for each $v \in C_1 \cap C_2$, there is a unique canonical linked pair $T^\pm$ such that $\Mon(T^\pm)$ has exponent $v$. The other desired condition -- that $\LM(f_{T^\pm})=\Mon(T^\pm)$ for canonical pairs -- is not obviously (to the author) true. Instead, we check it as part of the construction in examples. 

Although canonical tableaux for matching fields have not been defined, we can use the following (dependent on choices) definition. This is what is used in \S \ref{sec:examples}, where it is successful in computations. Fix a coherent matching field $\Lambda$ on $\Gr(r,n)$ such that the Pl\"ucker coordinates are a SAGBI basis. Each of the $\binom{n}{r}$ Pl\"ucker coordinates $p_J$ has a leading term determined by the matching field, to which we can associate a $r \times 1$ tableau $C_J$ (there are no choices since all rows have length one).  We order the columns $C_J$ using the lex ordering given by writing the labels of the column as a word, so we have $J_1,\dots,J_{\binom{n}{r}}$. Given a leading monomial $m$ of some element $f \in \CC[\Gr(r,n)]$, there is at least one way  (but certainly finitely many ways) to write 
\[m=\prod_{i=1}^{\binom{n}{r}} \LM(p_{J_i})^{a_{J_i}}, a_{J_i} \geq 0.\]
Each choice of $a=(a_{J_i})$ corresponds to a tableau $T_a$ given by taken $a_{J_1}$ columns of $C_{J_1}$, then $a_{J_2}$ columns of $C_{J_2}$ and so forth. 
\begin{mydef}
The $\Lambda$-canonical tableau associated to $m$ to is the tableau $T_a$ given by taking the lex maximal choice of $a=(a_{J_i})$ for all $a$ satisfying 
\[m=\prod_{i=1}^{\binom{n}{r}} \LM(p_{J_i})^{a_{J_i}}, a_{J_i} \geq 0.\]
A tableau is said to be canonical if there exists an element $f\in \CC[\Gr(r,n)]$ such that $T$ is the $\Lambda$-canonical tableau associated to $\LM(f)$. 
\end{mydef}
\begin{rem}This definition is not meant to be definitive; as mentioned, it depends on choices (ordering of the $C_J$, ordering of the $a$); moreover, it is not combinatorial in the following sense. Given a tableau $T$, one can easily determine whether it is semi-standard by looking at it; this is not obvious for canonical tableau defined in this sense. \emph{Canonical-type tableau} are discussed in \cite{clarke2024minimalcellularresolutionspowers}, which may also be useful, although the definition above is more suitable for computations for now. 
\end{rem}
We now describe how to apply Theorem \ref{thm:MF}. For examples, see \S \ref{sec:examples}; these were computed using MAGMA.
\begin{enumerate}
\item Start by finding a grading $[c^i_{jk}]$ such that it induces matching fields on $\Gr(r_1,n r_2)$ and $\Gr(r_2,n r_1)$ for which the Pl\"ucker coordinates are a SAGBI basis in this order (for families where this is known, see \cite{mohammadi1, mohammadi2}). 
\item Compute lattice generators $v_1,\dots,v_k$ of the cone $C_1 \cap C_2$. 
\item Each generator $v_i$ gives a monomial $m_i$.  Using basic cone computations in $C_1$ and $C_2$, find the canonical linked tableau pair $T^\pm_i$ such that $\Mon(T^\pm_i)=m_i.$ 
\end{enumerate}
If each $T^\pm_i$ satisfies 
\[\LM(f_{T_i^\pm})=m_i.\]
then by Theorem \ref{thm:MF}, then set $\{f_{T_i^\pm}\}$ is a SAGBI basis for $\K$. 
If this condition fails (which did not happen in the examples computed), then one can try the following variation on the construction, beginning at Step 3:
\begin{enumerate}\addtocounter{enumi}{2}
\item For each $m_i$, search over the linked pairs of tableaux $T^\pm$ obtained from $T^\pm_i$ (given above) by permuting the rows in all possible ways, and find one (if possible)  such
\[\LM(f_{T^\pm})=m_i.\]
Call this tableaux pair $S^\pm_i.$
\end{enumerate}
Again by Theorem \ref{thm:MF}, then set $\{f_{S_i^\pm}\}$ is a SAGBI basis for $\K$.

We will conclude this section by describing a family of gradings that satisfy  the first step above. Fix a Kronecker moduli space $\K$. For a tuple $p:=(p_1,\dots,p_{r_2})$ of integers, define the $r_2 \times r_1$ matrix
\[C_p:=\begin{bmatrix}
p_1 & 2 p_1 & \cdots & r_1 p_1\\
p_2 & 2 p_2 & \cdots & r_1 p_2\\
\vdots & & & \vdots \\
p_{r_2} & 2 p_{r_2} & \cdots & r_1 p_{r_2}\\
\end{bmatrix}.\]
Set $p^i:=(((r_2+1)^{i-1}, 2 (r_2+1)^{i-1}, \dots, r_2 (r_2+1)^{i-1}).$

Finally, we set 
\[C_0:=(C_{p^1},\dots,C_{p^n}).\]
\begin{lem}\label{lem:0grading} The leading term of a Pl\"ucker coordinate of $\Gr(r_1,nr_2)$ under the grading induced by $C_0$ is the diagonal term.
\end{lem}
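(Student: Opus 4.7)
The plan is to rewrite the weight of each monomial appearing in a Pl\"ucker coordinate as a dot product of the form $\sum_k \sigma(k) c_k$ indexed by $\sigma \in \Sym(r_1)$, and then invoke the rearrangement inequality. First I would make explicit the identification $M \cong \Mat(r_1, n r_2; \ZZ)$ used in Theorem~\ref{thm:intersection}: vertical stacking of $(A_1, \dots, A_n)$ followed by transposition sends the coordinate $x^i_{jk}$ to the entry in row $k$ and column $(i-1)r_2 + j$ of the Grassmannian matrix. Under the grading $C_0$, this means that the $(a,b)$-entry $y_{a,b}$ of the Grassmannian matrix has weight
\[\deg(y_{a,b}) \;=\; a \cdot j(b) \cdot (r_2+1)^{i(b)-1},\]
where $b - 1 = (i(b)-1) r_2 + (j(b)-1)$ with $1 \leq j(b) \leq r_2$.

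Next, for a Pl\"ucker coordinate $p_I$ with $I = \{b_1 < b_2 < \cdots < b_{r_1}\}$, each summand is indexed by $\sigma \in \Sym(r_1)$ and contributes total weight $\sum_{k} \sigma(k) c_k$, where $c_k := j(b_k)(r_2+1)^{i(b_k)-1}$. The diagonal term is the $\sigma = \id$ summand. By the rearrangement inequality, the sum $\sum_k \sigma(k) c_k$ (with $\sigma(1), \dots, \sigma(r_1)$ a permutation of $1, \dots, r_1$) is uniquely maximized by $\sigma = \id$ if and only if the coefficient sequence $c_1 < c_2 < \cdots < c_{r_1}$ is strictly increasing. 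So the whole problem reduces to proving this monotonicity.

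The remaining step is precisely this monotonicity check, and the hardest (really only) part is to notice why the base $r_2 + 1$ was chosen. Because $j(b) \in \{1, \dots, r_2\}$, we have $c = j(b)(r_2+1)^{i(b)-1} \in [(r_2+1)^{i(b)-1}, (r_2+1)^{i(b)})$; distinct values of $i(b)$ place the $c_k$ in disjoint intervals, and within a fixed interval $c_k$ is strictly increasing in $j(b_k)$. A short case split on whether $i(b_k) < i(b_{k+1})$, or $i(b_k) = i(b_{k+1})$ with $j(b_k) < j(b_{k+1})$, then yields $c_k < c_{k+1}$ whenever $b_k < b_{k+1}$. I do not expect any substantive obstacle beyond this base-$(r_2+1)$ place-value bookkeeping.
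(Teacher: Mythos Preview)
Your proof is correct and follows essentially the same line as the paper's: both reduce to showing that the weight of the $\sigma$-monomial is $\sum_t \sigma(t)\,p^{i_t}_{j_t}$ with $p^{i_t}_{j_t}=j_t(r_2+1)^{i_t-1}$ strictly increasing in the lex order on $(i_t,j_t)$, and then appeal (explicitly in your case, implicitly in the paper) to the rearrangement inequality. Your base-$(r_2+1)$ interval argument for monotonicity is exactly the paper's observation that the last entry $r_2(r_2+1)^{i-1}$ of $p^i$ is strictly less than the first entry $(r_2+1)^i$ of $p^{i+1}$.
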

\begin{proof} The induced grading is given by vertically adjoining the $n$ matrices in $C_0$ into a single $n r_2 \times r_1$ matrix. For simplicity, we won't transpose, but just take minors given by size $r_1$ subsets of rows. Note that for each row, the difference between consecutive entries is constant. This sequence of differences as one moves down the rows is the concatenation of the $p^i$; this concatenation is a strictly increasing sequence. This is clear within each $p^i$; for the rest, note that the last term of $p^{i}$ is $r_2 (r_2+1)^{i-1}$, and the first term of $p^{i+1}$ is $(r_2+1)^i$, which is strictly larger. 

Consider a Pl\"ucker coordinate of $\Gr(r_1,nr_2)$ determined by a choice of $r_1$ rows of a generic $n r_2 \times r_1$ matrix. That is, this is given by some choice of double indices
\[(i_1,j_1) < \dots <(i_l,j_l),\]
where $<$ is the lex ordering. Then the grading of the monomial corresponding to $\sigma \in \Sym(r_1)$ is
\[ \sum_{t=1}^{r_1} \sigma(t) p^{i_t}_{j_t},\]
and as $p^{i_1}_{j_1}< \cdots<p^{i_{r_1}}_{j_{r_1}},$ this achieves a unique maximum when $\sigma$ is the identity permutation. 
\end{proof}
\begin{lem} \label{lem:0grading1} The leading term of a Pl\"ucker coordinate of $\Gr(r_2,nr_1)$ under the grading induced by $C_0$ is the diagonal term.
\end{lem}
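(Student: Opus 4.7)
The plan is to mirror the argument of Lemma \ref{lem:0grading}, but now concatenating the $n$ matrices of $C_0$ horizontally into a single $r_2 \times nr_1$ weight matrix $\tilde{M}$, reflecting the fact that Plücker coordinates of $\Gr(r_2, nr_1)$ are size-$r_2$ minors taken from columns. The entry of $\tilde{M}$ in row $j$ and column $(i-1)r_1 + k$ is $k \cdot j \cdot (r_2+1)^{i-1}$. First I would record the structural observation dual to ``constant consecutive row differences'': every row of $\tilde{M}$ is a scalar multiple of the common column-weight vector $w = (k(r_2+1)^{i-1})_{(i,k)}$, with row $j$ equal to $j \cdot w$.

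Next, for a Plücker coordinate of $\Gr(r_2, nr_1)$ given by columns $c_1 < \cdots < c_{r_2}$ with $c_t = (i_t - 1)r_1 + k_t$, I would expand the $r_2 \times r_2$ minor and compute that the grading of the $\sigma$-term is
\[\sum_{t=1}^{r_2} \sigma(t)\, k_t(r_2+1)^{i_t-1}.\]
The rearrangement inequality then implies that this sum is uniquely maximized at $\sigma = \id$ precisely when the sequence $w_t := k_t(r_2+1)^{i_t-1}$ is strictly increasing in $t$, and in that case the diagonal monomial is the unique leading term of the Plücker coordinate.

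The main obstacle is verifying the strict monotonicity of $(w_t)$ in the lex order of $(i_t, k_t)$. Within a single block (fixed $i_t$) it is immediate since $k_t$ strictly increases. Between consecutive blocks, one has to compare the largest weight $r_1(r_2+1)^{i-1}$ in block $i$ with the smallest weight $(r_2+1)^i$ in block $i+1$, so the argument goes through cleanly when $r_1 \le r_2$. In the complementary regime $r_1 > r_2$, the natural fix is to apply the isomorphism $\K \cong \mathrm{K}^n_{r_2 r_1}$ coming from the opposite quiver, which interchanges the two dimensions and reduces the statement to the regime already handled.
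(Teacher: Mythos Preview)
Your argument is correct and is essentially the paper's own proof: the paper also concatenates horizontally, observes that the column-wise consecutive differences (equivalently, your column-weight vector $w$) form a strictly increasing sequence under $r_1\le r_2$, and then applies the rearrangement inequality exactly as in Lemma~\ref{lem:0grading}. One small caveat on your last paragraph: for $r_1>r_2$ the lemma about the \emph{specific} grading $C_0$ can genuinely fail (at block boundaries one gets $r_1(r_2+1)^{i-1}\ge (r_2+1)^i$, so some Pl\"ucker coordinates have no unique leading term), so your opposite-quiver reduction should be read as a WLOG assumption $r_1\le r_2$ rather than a proof of the stated lemma in that regime---which is exactly how the paper handles it, simply writing ``As $r_1\le r_2$'' and proceeding.
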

\begin{proof} The induced grading is given by horizontally  adjoining the $n$ matrices in $C_0$ into a single $ r_2 \times n r_1$ matrix. The argument is analogous to Lemma \ref{lem:0grading}. For each column, the difference between consecutive entries in constant. Going from left to right through the columns, these differences are the concatenation of $q^1,\dots,q^n$, where
\[q^i:=((r_2+1)^i, \dots, r_2(r_2+1)^i).\]
As $r_1 \leq r_2$, the concatenation is a strictly increasing sequence. 
Consider a Pl\"ucker coordinate of $\Gr(r_1,nr_2)$ determined by a choice of $r_2$ columns of a generic $ r_2 \times n r_1$ matrix. That is, this is given by some choice of double indices
\[(i_1,j_1) < \dots <(i_l,j_l),\]
where $<$ is the lex ordering. Then the grading of the monomial corresponding to $\sigma \in \Sym(r_2)$ is
\[ \sum_{t=1}^{r_2} \sigma(t) q^{i_t}_{j_t},\]
and as $q^{i_1}_{j_1}< \cdots<q^{i_{r_1}}_{j_{r_1}},$ this achieves a unique maximum when $\sigma$ is the identity permutation. 

\end{proof}

\begin{prop} The grading $C_0$ induces the Gelfand--Cetlin toric degeneration of $\K$.
\end{prop}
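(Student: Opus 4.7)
The plan is to verify that the grading $C_0$ satisfies both hypotheses of Theorem \ref{thm:MF}, and that the resulting matching fields on the two Grassmannians are the diagonal ones, so that the output polytope is precisely the one from Theorem \ref{thm:intersection}.

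First I would use Lemmas \ref{lem:0grading} and \ref{lem:0grading1} to observe that for every Pl\"ucker coordinate of $\Gr(r_1, nr_2)$ and of $\Gr(r_2, nr_1)$ the leading monomial under $C_0$ is the diagonal one. This means the induced matching fields $\Lambda_1$ on $\Gr(r_1, nr_2)$ and $\Lambda_2$ on $\Gr(r_2, nr_1)$ are both the diagonal matching field, so their matching field polytopes coincide with the Gelfand--Cetlin polytopes: $P_{\Lambda_1} = P_{r_1, nr_2}$ and $P_{\Lambda_2} = P_{r_2, nr_1}$. By \cite{GL}, the diagonal matching field induces a toric degeneration of the Grassmannian to the Gelfand--Cetlin toric variety, verifying condition (1) of Theorem \ref{thm:MF}.

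Next I would verify condition (2). Let $C_1$ and $C_2$ be the cones over $P_{\Lambda_1}$ and $P_{\Lambda_2}$; these are the cones $C^{nr_2}_{1 r_1}$ and $C^{nr_1}_{1 r_2}$ in the notation of Theorem \ref{thm:intersection}. From the argument inside the proof of Theorem \ref{thm:intersection} we have
\[
C^n_{r_1 r_2} = C^{nr_1}_{1 r_2} \cap C^{nr_2}_{1 r_1} = C_1 \cap C_2,
\]
and by Theorem \ref{thm:finite} the lattice points of $C^n_{r_1 r_2}$ are exactly the exponent vectors $v_{T^\pm}$ of semi-standard linked pairs $T^\pm$. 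Choosing for each lattice generator $v_i$ of $C_1 \cap C_2$ the corresponding semi-standard pair $T^\pm_i$, the theorem of \cite{HeubergerKalashnikov} recalled in \S \ref{sec:review} gives $\LM(f_{T^\pm_i}) = \Mon(T^+_i)$, whose exponent vector is $v_i$. Hence condition (2) holds with $f_i = f_{T^\pm_i}$.

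Applying Theorem \ref{thm:MF}, we conclude that $C_0$ induces a toric degeneration of $\K$ to $X_F$, where $F$ is the normal fan of $r_2 P_{\Lambda_1} \cap r_1 P_{\Lambda_2} = r_2 P_{r_1, nr_2} \cap r_1 P_{r_2, nr_1}$. By Theorem \ref{thm:intersection} this is exactly the polytope defining the Gelfand--Cetlin toric degeneration of $\K$, completing the proof. The only substantive step is matching the combinatorics of $C_1 \cap C_2$ under the $C_0$-grading with the cone $C^n_{r_1 r_2}$ of semi-standard linked pairs; this is immediate because the $C_0$-grading picks out the diagonal monomial in every Pl\"ucker minor on both sides, so the two descriptions of the intersection cone literally agree and no delicate comparison of leading terms of $f_{T^\pm}$ is needed beyond the result already cited from \cite{HeubergerKalashnikov}.
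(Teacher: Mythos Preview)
Your proposal is correct and follows essentially the same route as the paper: verify the two hypotheses of Theorem \ref{thm:MF} for the grading $C_0$, using Lemmas \ref{lem:0grading} and \ref{lem:0grading1} for condition (1), and then identify the resulting polytope with that of Theorem \ref{thm:intersection}. The only cosmetic difference is that for condition (2) the paper appeals directly to Proposition~5.5 of \cite{HeubergerKalashnikov}, whereas you unpack the same verification through the cone identification $C_1\cap C_2=C^n_{r_1 r_2}$ from the proof of Theorem \ref{thm:intersection}, Theorem \ref{thm:finite}, and the leading-monomial result recalled in \S\ref{sec:review}; these are the same ingredients.
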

\begin{proof}
Basically, we are checking the conditions of Theorem \ref{thm:MF}.  By Lemmas \ref{lem:0grading} and \ref{lem:0grading1}, the first condition holds, as the associated matching field is just the diagonal matching field, which corresponds to the Gelfand--Cetlin degeneration. Proposition 5.5 from \cite{HeubergerKalashnikov} is the precise statement of the second condition. Therefore, by the theorem, we obtain a toric degeneration of $\K$ to what we have defined as the Gelfand--Cetlin degeneration. 
\end{proof} 
It would be interesting to find families of weights that induce block diagonal matching fields on both associated Grassmannians. In the section below, we explain how to do this when both $b$ and $r_1$ are equal to $2$. 

\section{Examples} \label{sec:examples}
We describe the polytopes obtained in some small examples of Kronecker moduli spaces. 
\begin{eg}[$K^{3}_{2 3}$] \label{eg:K323}The cone $C^3_{2 3}$ is generated in degree $1$ by 20 lattice generators, corresponding to 20 primitive semi-standard Young tableaux pairs. These are listed in \cite{HeubergerKalashnikov}. The slice at height 1 is a polytope with $18$ vertices, and the dual fan has 13 rays. The associated toric variety is Gorenstein Fano with terminal singularities.
\end{eg}
\begin{eg}[$K^4_{2 3}$]\label{eg:K423} In \cite{HeubergerKalashnikov}, we did not compute a full SAGBI basis of Cox ring of $K^4_{2 3}$, a 12 dimensional variety. Using the method outlined above, we can now determine the cone $C^4_{2 3}$. This has 232 lattice generators, corresponding to 232 primitive semi-standard Young tableaux pairs. There are 126 in degree 1, 86 in degree 2, and 20 in degree 3. We record the 20 in the highest degree here:
\[   \begin{bmatrix}
         
            1 1 &
            1 1 &
            1 1 &
            1 1 &
            1 1 &
            2 2 \\

            1 2 &
            1 2 &
            1 2 &
            1 2 &
            3 2 &
            3 2 \\

            2 1 &
            2 1 &
            2 1 &
            3 1 &
            4 2 &
            4 2 \\
         
    \end{bmatrix},\hspace{5mm}
    \begin{bmatrix}
         
            1 1 &
            1 1 &
            1 1 &
            1 1 &
            1 1 &
            2 2 \\

            1 2 &
            1 2 &
            1 2 &
            2 2 &
            3 2 &
            3 2 \\

            2 1 &
            2 1 &
            2 1 &
            3 1 &
            4 2 &
            4 2 \\
         
    \end{bmatrix},\hspace{5mm}
    \begin{bmatrix}
         
            1 1 &
            1 1 &
            1 1 &
            1 1 &
            1 1 &
            2 2 \\

            1 2 &
            1 2 &
            1 2 &
            1 2 &
            3 2 &
            3 2 \\

            2 1 &
            2 1 &
            2 1 &
            2 1 &
            4 2 &
            4 2 \\
         
  \end{bmatrix},\]
  
  \[\begin{bmatrix}
         
            1 1 &
            1 1 &
            2 2 &
            2 2 &
            3 2 &
            3 2 \\

            2 1 &
            2 1 &
            3 1 &
            3 1 &
            4 1 &
            4 1 \\

            3 1 &
            3 2 &
            4 2 &
            4 2 &
            4 2 &
            4 2 \\
         
  \end{bmatrix},\hspace{5mm}
  \begin{bmatrix}
         
            1 1 &
            1 1 &
            2 2 &
            2 2 &
            3 2 &
            3 2 \\

            2 1 &
            2 1 &
            3 1 &
            3 1 &
            4 1 &
            4 1 \\

            3 1 &
            4 2 &
            4 2 &
            4 2 &
            4 2 &
            4 2 \\
         
        \end{bmatrix},\hspace{5mm}
        \begin{bmatrix}
         
            1 1 &
            1 1 &
            2 2 &
            2 2 &
            3 2 &
            3 2 \\

            2 1 &
            2 1 &
            3 1 &
            3 1 &
            4 1 &
            4 1 \\

            3 1 &
            3 2 &
            3 2 &
            4 2 &
            4 2 &
            4 2 \\
         
        \end{bmatrix},\]
    \[    \begin{bmatrix}
         
            1 1 &
            1 1 &
            1 1 &
            1 1 &
            1 1 &
            2 2 \\

            1 2 &
            1 2 &
            1 2 &
            1 2 &
            3 2 &
            3 2 \\

            2 1 &
            2 1 &
            3 1 &
            3 1 &
            4 2 &
            4 2 \\
         
        \end{bmatrix},\hspace{5mm}
        \begin{bmatrix}
         
            1 1 &
            1 1 &
            1 1 &
            1 1 &
            1 1 &
            2 2 \\

            1 2 &
            1 2 &
            2 2 &
            2 2 &
            3 2 &
            3 2 \\

            2 1 &
            2 1 &
            3 1 &
            3 1 &
            4 2 &
            4 2 \\
         
        \end{bmatrix},\hspace{5mm}
        \begin{bmatrix}
         
            1 1 &
            1 1 &
            1 1 &
            1 1 &
            1 1 &
            2 2 \\

            1 2 &
            1 2 &
            1 2 &
            2 2 &
            3 2 &
            3 2 \\

            2 1 &
            2 1 &
            3 1 &
            3 1 &
            4 2 &
            4 2 \\
         
        \end{bmatrix},\]
        \[\begin{bmatrix}
         
            1 1 &
            1 1 &
            2 2 &
            2 2 &
            3 2 &
            3 2 \\

            2 1 &
            2 1 &
            4 1 &
            4 1 &
            4 1 &
            4 1 \\

            3 1 &
            4 2 &
            4 2 &
            4 2 &
            4 2 &
            4 2 \\
         
        \end{bmatrix},\hspace{5mm}
        \begin{bmatrix}
         
            1 1 &
            1 1 &
            2 2 &
            3 2 &
            3 2 &
            3 2 \\

            2 1 &
            2 1 &
            4 1 &
            4 1 &
            4 1 &
            4 1 \\

            3 1 &
            4 2 &
            4 2 &
            4 2 &
            4 2 &
            4 2 \\
         
        \end{bmatrix},\hspace{5mm}
        \begin{bmatrix}
         
            1 1 &
            1 1 &
            3 2 &
            3 2 &
            3 2 &
            3 2 \\

            2 1 &
            2 1 &
            4 1 &
            4 1 &
            4 1 &
            4 1 \\

            3 1 &
            4 2 &
            4 2 &
            4 2 &
            4 2 &
            4 2 \\
         
        \end{bmatrix},\]
       \[ \begin{bmatrix}
         
            1 1 &
            1 1 &
            1 1 &
            1 1 &
            2 1 &
            2 2 \\

            1 2 &
            1 2 &
            2 2 &
            2 2 &
            3 2 &
            3 2 \\

            2 1 &
            2 1 &
            3 1 &
            3 1 &
            4 2 &
            4 2 \\
         
        \end{bmatrix},\hspace{5mm}
        \begin{bmatrix}
         
            1 1 &
            1 1 &
            1 1 &
            1 1 &
            2 1 &
            2 2 \\

            1 2 &
            1 2 &
            1 2 &
            2 2 &
            3 2 &
            3 2 \\

            2 1 &
            2 1 &
            3 1 &
            3 1 &
            4 2 &
            4 2 \\
         
        \end{bmatrix},\hspace{5mm}
        \begin{bmatrix}
         
            1 1 &
            1 1 &
            1 1 &
            2 1 &
            2 1 &
            2 2 \\

            1 2 &
            1 2 &
            2 2 &
            2 2 &
            3 2 &
            3 2 \\

            2 1 &
            2 1 &
            3 1 &
            3 1 &
            4 2 &
            4 2 \\
         
        \end{bmatrix},\]
      \[  \begin{bmatrix}
         
            1 1 &
            1 1 &
            1 1 &
            1 1 &
            2 1 &
            2 2 \\

            1 2 &
            1 2 &
            1 2 &
            2 2 &
            3 2 &
            3 2 \\

            2 1 &
            2 1 &
            2 1 &
            3 1 &
            4 2 &
            4 2 \\
         
        \end{bmatrix},\hspace{5mm}
        \begin{bmatrix}
         
            1 1 &
            1 1 &
            2 2 &
            3 2 &
            3 2 &
            3 2 \\

            2 1 &
            2 1 &
            3 1 &
            4 1 &
            4 1 &
            4 1 \\

            3 1 &
            3 2 &
            4 2 &
            4 2 &
            4 2 &
            4 2 \\
         
        \end{bmatrix},\hspace{5mm}
        \begin{bmatrix}
         
            1 1 &
            1 1 &
            2 2 &
            2 2 &
            3 2 &
            3 2 \\

            2 1 &
            2 1 &
            3 1 &
            4 1 &
            4 1 &
            4 1 \\

            3 1 &
            3 2 &
            4 2 &
            4 2 &
            4 2 &
            4 2 \\
         
        \end{bmatrix},\]
   \[     \begin{bmatrix}
         
            1 1 &
            1 1 &
            2 2 &
            2 2 &
            3 2 &
            3 2 \\

            2 1 &
            2 1 &
            3 1 &
            4 1 &
            4 1 &
            4 1 \\

            3 1 &
            4 2 &
            4 2 &
            4 2 &
            4 2 &
            4 2 \\
         
        \end{bmatrix},\hspace{5mm}
        \begin{bmatrix}
         
            1 1 &
            1 1 &
            2 2 &
            3 2 &
            3 2 &
            3 2 \\

            2 1 &
            2 1 &
            3 1 &
            4 1 &
            4 1 &
            4 1 \\

            3 1 &
            4 2 &
            4 2 &
            4 2 &
            4 2 &
            4 2 \\
         
    \end{bmatrix}.\] 
 The polytope which is the height one slice of $C^4_{2 3}$ has 141 vertices. The dual fan has 26 rays, and the associated toric variety is Fano, but not Gorenstein. The polytope spanned by the rays is complicated; conjecturally, a maximally mutable Laurent polynomial supported on this polytope should be a mirror to $K^4_{23}$, but computing such a polynomial on such a large dimensional polytope is very hard.
\end{eg}

We now turn to considering examples of toric degenerations of Kronecker quivers coming from matching fields. In the special case where $r_1=2$, we show that there is a grading $C_2$ giving the block diagonal matching field with $b=2$. We first define $C_2$. 
 
\begin{mydef} Adjoin the $n$ matrices in $C_0$ into a single $ r_1 \times n r_2$ matrix $B_0$, and let $B_2$ be the matrix obtained replacing the first two rows of $B_0$ with the first two rows of $C_{p^{n+1}}$.
 Breaking this large matrix into $n$ small matrices, we obtain a new grading which we denote $C_2.$
\end{mydef}
\begin{eg} If $r_1=2,r_2=3$ and $n=3$, then $C_0$ is
\[(\begin{bmatrix}
 1 &2 \\ 
 2 &4 \\
 3 & 6\\
\end{bmatrix},\hspace{3mm}
\begin{bmatrix}
4& 8\\
8& 16\\ 
12& 24\\ 
\end{bmatrix},\hspace{3mm}
\begin{bmatrix}
16& 32\\
 32& 64\\ 
 48& 96\\
\end{bmatrix}).\]
The new grading, $C_2$ is
\[(\begin{bmatrix}
 64 &128 \\ 
 128 &256 \\
 3 & 6\\
\end{bmatrix},\hspace{3mm}
\begin{bmatrix}
4& 8\\
8& 16\\ 
12& 24\\ 
\end{bmatrix},\hspace{3mm}
\begin{bmatrix}
16& 32\\
 32& 64\\ 
 48& 96\\
\end{bmatrix}).\]

\end{eg}

\begin{lem}\label{lem:kgrading} Assume that $r_1=2$. Then the leading term of a Pl\"ucker coordinate of $\Gr(2,nr_2)$ under the grading induced by $C_2$ agrees with the $b=2$ block diagonal matching field.
\end{lem}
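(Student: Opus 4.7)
The strategy is to parallel the proof of Lemma \ref{lem:0grading} and reduce the leading-term question to a single numerical comparison of row-differences. Vertically stack the $n$ matrices of $C_2$ into one $n r_2 \times 2$ matrix $D$ and index its rows by double indices $(i,j)$ with $1\le i\le n$, $1\le j\le r_2$, ordered lexicographically, so that rows $(1,1)$ and $(1,2)$ are exactly the two replaced rows. Plücker coordinates of $\Gr(2,nr_2)$ are $2\times 2$ minors on a pair of rows $(i_1,j_1)<(i_2,j_2)$, and each such minor has only two monomials, the one coming from the identity permutation (``diagonal'') and the one coming from the transposition (``off-diagonal''). For every row of $D$, let $\Delta_{(i,j)}$ be the difference between its second and first entry.

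A direct computation shows that the grade of the off-diagonal monomial minus the grade of the diagonal monomial equals exactly $\Delta_{(i_1,j_1)}-\Delta_{(i_2,j_2)}$. Hence the matching field selected by $C_2$ assigns the transposition to $J=\{(i_1,j_1),(i_2,j_2)\}$ precisely when $\Delta_{(i_1,j_1)}>\Delta_{(i_2,j_2)}$. From the definition of $C_2$ one reads off that for the unchanged rows (i.e.\ $(i,j)\notin\{(1,1),(1,2)\}$) we have $\Delta_{(i,j)}=j(r_2+1)^{i-1}$, exactly as in $C_0$, while for the two replaced rows $\Delta_{(1,1)}=(r_2+1)^n$ and $\Delta_{(1,2)}=2(r_2+1)^n$.

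I would then split into three cases based on $|J\cap\{(1,1),(1,2)\}|$. If both indices lie in $\{(1,1),(1,2)\}$, then $\Delta_{(1,1)}<\Delta_{(1,2)}$ and the diagonal wins, matching $\Lambda_2$ in the case $|J\cap\{1,2\}|=2$. If both lie outside, then by the strictly increasing property of the unchanged $\Delta$-values established inside the proof of Lemma \ref{lem:0grading} the diagonal again wins, matching $\Lambda_2$ in the case $|J\cap\{1,2\}|=0$. If exactly one lies inside, then (since the inside indices are lex-smallest) $(i_1,j_1)\in\{(1,1),(1,2)\}$ and
\[
\Delta_{(i_1,j_1)}\ge (r_2+1)^n \;>\; r_2(r_2+1)^{n-1}\;\ge\;\Delta_{(i_2,j_2)},
\]
so the transposition wins, matching $\Lambda_2$ in the case $|J\cap\{1,2\}|=1$.

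There is no real obstacle here: the entire lemma turns on the elementary estimate $(r_2+1)^n>r_2(r_2+1)^{n-1}$, which guarantees that the two replaced rows acquire $\Delta$-values strictly larger than every unreplaced row. That single inequality is what produces the flip in leading term for exactly those $J$ that contain precisely one of the first two rows, which is the defining feature of the $b=2$ block diagonal matching field.
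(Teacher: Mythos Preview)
Your proof is correct and follows essentially the same approach as the paper. Both arguments split according to $|J\cap\{\text{first two rows}\}|$ and reduce the choice of leading term to a comparison of row-differences; the paper handles the mixed case by observing that the minor ``is the same as for some minor in $C_0$ for $n+1$, but with the rows swapped'', whereas you unpack this into the explicit inequality $(r_2+1)^n>r_2(r_2+1)^{n-1}$, which is exactly the content behind that observation.
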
 
\begin{proof}
Consider the Pl\"ucker coordinate $p_{ij}$ where $i<j$. If $i>2$, or $i=1,j=2$ then the grading is the same as in the $C_0$ case, and by the same argument the leading term is the diagonal. If $\{i,j\} \cap \{1,2\}$ is a set of size $1$, then the grading of the minor is the same as for some minor in $C_0$ for $n+1$, but with the rows swapped. Therefore the leading term is the anti-diagonal, so the matching field is $(1 2)$ as claimed.
\end{proof}

\begin{lem}\label{lem:kgrading1} Assume that $r_1=2$. Then the leading term of a Pl\"ucker coordinate of $\Gr(r_2,2n)$ under the grading induced by $C_2$ agrees with the $b=2$ block diagonal matching field.
\end{lem}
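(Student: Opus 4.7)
The plan is to mimic the strategy of Lemma \ref{lem:kgrading}, adapted to the $r_2$-minor setting. The first step is to unravel the grading $C_2$ in its $M_2$ representation: horizontally adjoining the $n$ blocks yields an $r_2\times 2n$ weight matrix $W$ that agrees with the $C_0$-grading on columns $3,4,\ldots,2n$, so that $w_{i,k}=i\,c_k$ with $c_3<c_4<\cdots<c_{2n}$ by Lemma \ref{lem:0grading1}, and modifies only the first two rows of columns $1$ and $2$, where $w_{i,k}=i k (r_2+1)^n$ for $i\in\{1,2\}$, $k\in\{1,2\}$, and $w_{i,k}=ik$ for $i\geq 3$. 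The leading monomial of $p_J$ for $J=\{j_1<\cdots<j_{r_2}\}$ corresponds to the $\sigma\in\Sym(r_2)$ maximizing $\sum_t w_{\sigma(t),j_t}$, and I would split the analysis by $|J\cap\{1,2\}|$.

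When $|J\cap\{1,2\}|=0$, only unmodified columns are involved and Lemma \ref{lem:0grading1} directly forces $\sigma=\mathrm{id}$, agreeing with $\Lambda_2(J)$. When $|J\cap\{1,2\}|=1$ (so $j_1\in\{1,2\}$ and $j_2,\ldots,j_{r_2}\geq 3$), the key observation is that $w_{2,j_1}=2j_1(r_2+1)^n$ is the dominant entry. A swap argument confirms $\sigma(1)=2$: if $\sigma(1)\neq 2$, swapping it with the $t^*$ for which $\sigma(t^*)=2$ changes the weight by an amount bounded below by $(r_2+1)^n - r_1 (r_2+1)^{n-1} = (r_2+1)^{n-1}(r_2+1-r_1)>0$, using $r_2\geq r_1$. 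The rearrangement inequality on the remaining unmodified columns then pins down $\sigma(2)=1$ and $\sigma(t)=t$ for $t\geq 3$, giving $\sigma=(1\,2)$ as required.

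The remaining case $|J\cap\{1,2\}|=2$, i.e., $j_1=1$ and $j_2=2$, is where both modified columns participate and constitutes the main obstacle. I would first argue that $\{\sigma(1),\sigma(2)\}=\{1,2\}$: otherwise some row $s\geq 3$ is assigned to a modified column, and a double swap that re-routes rows $1$ and $2$ into columns $1$ and $2$ improves the weight by a positive $(r_2+1)^n$-term, again by $r_2+1>r_1$. Once rows $1,2$ occupy columns $1,2$, a direct comparison gives $w_{1,1}+w_{2,2}=5(r_2+1)^n>4(r_2+1)^n=w_{2,1}+w_{1,2}$, so the identity beats the transposition on these two positions, and rearrangement fixes $\sigma(t)=t$ for $t\geq 3$. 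This yields $\sigma=\mathrm{id}$, matching $\Lambda_2(J)=\mathrm{id}$. All required inequalities collapse to $r_2+1>r_1$, which holds since $r_1=2\leq r_2$; the bookkeeping to exclude the various bad subpatterns in the last case is routine but requires care.
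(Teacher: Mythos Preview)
Your proposal is correct and follows the same three-way case split on $|J\cap\{1,2\}|$ as the paper, with the same dominance-plus-rearrangement reasoning in each case. The only minor difference is in the case $j_1=1,\,j_2=2$: rather than your swap argument, the paper observes that the independent maximum over columns $\{1,2\}$ (attained on rows $\{1,2\}$, diagonally) and the independent maximum over the remaining columns (attained on rows $\{3,\dots,r_2\}$) use complementary row sets and are therefore simultaneously achievable, which immediately gives $\sigma=\mathrm{id}$ without the bookkeeping you flag.
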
 
\begin{proof} Consider a Pl\"ucker coordinate $p_{i_1 \dots i_{r_2}}$, $i_1<\cdots<i_{r_2}$. If $i_1>2$, then the grading is the same as in $C_0$ case, and so the leading term is the diagonal as required. 

Consider the case when $i_1=1$ and $i_2=2$. Every monomial in the minor has a contribution from the first two columns and from the remaining $r_2-2$ columns indexed by $i_{3},\dots,i_{r_2}$. If we look at all possible contributions from the first two columns, the maximally graded option is upper two diagonal terms.  Similarly, looking at contributions from the last $r_2-2$ columns, as the entries of the columns increase, the maximally graded contribution comes from the last $r_2-2$ rows. Now by the same argument as in Lemma \ref{lem:0grading1}, the maximally graded monomial is thus the diagonal term. 

The final case to consider is when either $\{i_1,i_2\} \cap \{1,2\}$ is a set of size $1$. As in the previous case, we can consider the first column separately from the remaining.  The second entry in the $i_1$ column is the maximal one, and it is at least $(r_2+1)^{n}$ larger than any other entry in the column. Now looking at the last $r_2-1$ chosen columns, the maximal grading achieved by a monomial in sub-minor is the diagonal term in the bottom oriented minor. Swapping the variable from the $i_2$ column from the second row to the first row reduces it by $(r_2+1)^k$ for $k<n$, so the minor corresponding to the matching field $(1 2)$ is the leading term as required. 
\end{proof}

We now apply Theorem \ref{thm:MF} to the $C_2$ grading in the examples $K^3_{2 3}$ and $K^4_{2 3}$. 
\begin{eg}\label{eg:MFK323} In this example, we consider the matching field on $K^3_{2 3}$ given by the $C_2$ grading. Since $C_2$ induces the $b=2$ block diagonal matching field for both $\Gr(2,9)$ and $\Gr(3,6)$ (by Lemmas \ref{lem:kgrading} and \ref{lem:kgrading1}), the first condition follows by \cite{mohammadi2}. 

To apply the theorem, compute the cones $C_1$ and $C_2$ and their intersection $C$. The cone $C$ is generated by 20 lattice generators. This is a cone generated by 20 lattice generators, all in degree 1. For each generator $v$, let $T^\pm_v$ be the canonical linked pair for the monomial associated to $v$. 
The 20 $T^+_v$ are
\[\begin{bmatrix}

            2 1 &
            2 1 \\

            2 2 &
            3 1 \\

            3 2 &
            3 2 \\
        
    \end{bmatrix},\hspace{5mm}
    \begin{bmatrix}
         
            2 1 &
            3 1 \\

            3 1 &
            1 2 \\

            3 2 &
            3 2 \\
        
    \end{bmatrix}, \hspace{5mm}
    \begin{bmatrix}
         
            2 1 &
            3 1 \\

            2 2 &
            1 2 \\

            3 1 &
            3 2 \\
        
    \end{bmatrix}, \hspace{5mm}
    \begin{bmatrix}
         
            2 1 &
            2 1 \\

            1 2 &
            3 1 \\

            3 2 &
            3 2 \\
        
    \end{bmatrix}, \hspace{5mm}
    \begin{bmatrix}
         
            2 1 &
            2 1 \\

            1 2 &
            3 1 \\

            2 2 &
            3 2 \\
        
    \end{bmatrix}, \]
  \[  \begin{bmatrix}
         
            2 1 &
            2 1 \\

            1 2 &
            2 2 \\

            3 1 &
            3 2 \\
        
    \end{bmatrix}, \hspace{5mm}
    \begin{bmatrix}
         
            2 1 &
            2 1 \\

            1 2 &
            2 2 \\

            2 2 &
            3 1 \\
        
    \end{bmatrix}, \hspace{5mm}
    \begin{bmatrix}
         
            2 1 &
            3 1 \\

            1 2 &
            1 2 \\

            3 1 &
            3 2 \\
        
    \end{bmatrix}, \hspace{5mm}
    \begin{bmatrix}
         
            2 1 &
            2 1 \\

            1 2 &
            1 2 \\

            3 1 &
            3 2 \\
        
    \end{bmatrix}, \hspace{5mm}
    \begin{bmatrix}
         
            2 1 &
            2 1 \\

            1 2 &
            1 2 \\

            2 2 &
            3 1 \\
        
    \end{bmatrix}, \]
    \[\begin{bmatrix}
         
            1 1 &
            2 1 \\

            1 2 &
            3 1 \\

            3 2 &
            3 2 \\
        
    \end{bmatrix}, \hspace{5mm}
    \begin{bmatrix}
         
            1 1 &
            2 1 \\

            1 2 &
            3 1 \\

            2 2 &
            3 2 \\
        
    \end{bmatrix}, \hspace{5mm}
   \begin{bmatrix}
         
            1 1 &
            2 1 \\

            1 2 &
            2 2 \\

            2 1 &
            3 2 \\
        
    \end{bmatrix}, \hspace{5mm}
    \begin{bmatrix}
         
            1 1 &
            3 1 \\

            1 2 &
            1 2 \\

            3 1 &
            3 2 \\
        
    \end{bmatrix}, \hspace{5mm}
    \begin{bmatrix}
         
            1 1 &
            3 1 \\

            1 2 &
            1 2 \\

            2 1 &
            3 2 \\
        
    \end{bmatrix}, \]
   \[ \begin{bmatrix}
         
            1 1 &
            2 1 \\

            1 2 &
            1 2 \\

            3 1 &
            3 2 \\
        
    \end{bmatrix}, \hspace{5mm}
    \begin{bmatrix}
         
            1 1 &
            2 1 \\

            1 2 &
            1 2 \\

            2 2 &
            3 1 \\
        
    \end{bmatrix}, \hspace{5mm}
    \begin{bmatrix}
         
            1 1 &
            2 1 \\

            1 2 &
            1 2 \\

            2 1 &
            3 2 \\
        
    \end{bmatrix}, \hspace{5mm}
    \begin{bmatrix}
         
            1 1 &
            2 1 \\

            1 2 &
            1 2 \\

            2 1 &
            2 2 \\
        
    \end{bmatrix}, \hspace{5mm}
    \begin{bmatrix}
         
            1 1 &
            1 1 \\

            1 2 &
            1 2 \\

            2 1 &
            3 2 \\
        
    \end{bmatrix}.\]
The $T^-_v$, in the same order (so that the first tableau above is in a linked pair with the first tableau below, and so forth), are

  \[  \begin{bmatrix}
        
            2 1 &
            2 1 &
            3 2 \\

            2 2 &
            3 3 &
            3 3 \\
        
    \end{bmatrix},\hspace{5mm}
    \begin{bmatrix}
        
            2 1 &
            3 1 &
            3 2 \\

            1 2 &
            3 3 &
            3 3 \\
        
    \end{bmatrix},\hspace{5mm}
    \begin{bmatrix}
        
            2 1 &
            3 1 &
            3 3 \\

            2 2 &
            3 3 &
            1 2 \\
        
    \end{bmatrix},\hspace{5mm}
    \begin{bmatrix}
        
            2 1 &
            2 1 &
            3 2 \\

            1 2 &
            3 3 &
            3 3 \\
        
    \end{bmatrix},\hspace{5mm}
    \begin{bmatrix}
        
            2 1 &
            2 1 &
            3 2 \\

            1 2 &
            2 3 &
            3 3 \\
        
    \end{bmatrix},\]
   \[ \begin{bmatrix}
        
            2 1 &
            2 1 &
            3 3 \\

            2 2 &
            3 3 &
            1 2 \\
        
    \end{bmatrix}, \hspace{5mm}
   \begin{bmatrix}
        
            2 1 &
            2 1 &
            3 3 \\

            2 2 &
            2 3 &
            1 2 \\
        
    \end{bmatrix},\hspace{5mm}
    \begin{bmatrix}
        
            2 1 &
            3 1 &
            3 3 \\

            1 2 &
            3 3 &
            1 2 \\
        
    \end{bmatrix},\hspace{5mm}
    \begin{bmatrix}
        
            2 1 &
            2 1 &
            3 3 \\

            1 2 &
            3 3 &
            1 2 \\
        
    \end{bmatrix},\hspace{5mm}
    \begin{bmatrix}
        
            2 1 &
            2 1 &
            3 3 \\

            1 2 &
            2 3 &
            1 2 \\
        
    \end{bmatrix},\]
  \[  \begin{bmatrix}
        
            1 1 &
            2 1 &
            3 2 \\

            1 2 &
            3 3 &
            3 3 \\
        
    \end{bmatrix},\hspace{5mm}
    \begin{bmatrix}
        
            1 1 &
            2 1 &
            3 2 \\

            1 2 &
            2 3 &
            3 3 \\
        
    \end{bmatrix},\hspace{5mm}
   \begin{bmatrix}
        
            1 1 &
            2 1 &
            2 3 \\

            1 2 &
            2 2 &
            3 3 \\
        
    \end{bmatrix},\hspace{5mm}
    \begin{bmatrix}
        
            1 1 &
            3 1 &
            3 3 \\

            1 2 &
            3 3 &
            1 2 \\
        
    \end{bmatrix},\hspace{5mm}
    \begin{bmatrix}
        
            1 1 &
            2 3 &
            3 1 \\

            1 2 &
            1 2 &
            3 3 \\
        
    \end{bmatrix},\]
 \[   \begin{bmatrix}
        
            1 1 &
            2 1 &
            3 3 \\

            1 2 &
            3 3 &
            1 2 \\
        
    \end{bmatrix},\hspace{5mm}
    \begin{bmatrix}
        
            1 1 &
            2 1 &
            3 3 \\

            1 2 &
            2 3 &
            1 2 \\
        
    \end{bmatrix},\hspace{5mm}
    \begin{bmatrix}
        
            1 1 &
            2 1 &
            2 3 \\

            1 2 &
            1 2 &
            3 3 \\
        
    \end{bmatrix},\hspace{5mm}
   \begin{bmatrix}
        
            1 1 &
            2 1 &
            2 3 \\

            1 2 &
            2 3 &
            1 2 \\
        
    \end{bmatrix},\hspace{5mm}
    \begin{bmatrix}
        
            1 1 &
            1 1 &
            2 3 \\

            1 2 &
            1 2 &
            3 3 \\
        
    \end{bmatrix}.\]
One can then check for each of the 20 generators $v$, that the exponent of the leading monomial of $f_{T^\pm_v}$ is $v$ as required. 

We therefore obtain that there is a toric degeneration of $K^3_{23}$ to the toric variety $X_F$, where $F$ is the normal fan of the polytope obtained by taking the height 1 slice of $C$. This polytope has 20 vertices (its only lattice points), which are the lattice generators of $C$. The toric variety is Fano, Gorenstein, and terminal. The fan $F$ has 12 rays. As in \cite{HeubergerKalashnikov}, we can write down a Laurent polynomial $f$ supported on the polytope $Q$, where $Q$ given by taking the convex hull of the primitive generators of the rays of $F$. The polytope $Q$ is reflexive and the only lattice points are its 12 vertices and the origin. 

 The Laurent polynomial $f$ is
\begin{align*} x_1 x_2 x_3/(x_5 x_6) + x_1 x_3/x_6 + x_1/x_5 + x_2 x_3 + x_2 x_4/x_5 \\+ 
x_3/(x_4 x_6) + x_4 + x_5 + x_6 + 1/(x_2 x_4) + x_5/(x_1 x_4) + 1/(x_1 x_3).\end{align*}
This Laurent polynomial mirror is expected to be a conjectural mirror to $K^3_{23}$. See \cite{fanomanifolds} or \cite{HeubergerKalashnikov} for a more careful definition. Computing the first 20 terms of the \emph{period sequence} of this Laurent polynomial, we obtain
\[(1, 0, 0, 18, 0, 0, 4590, 0, 0, 1728720, 0, 0, 876610350, 0, 0, 520461209268, 
0, 0, 343838539188144, 0, 0). \]
In \cite{HeubergerKalashnikov} we produced an analogue of this Laurent polynomial using the $b=0$ grading (recalled here in Example \ref{eg:K323}); while the underlying polytopes are \emph{not} isomorphic, the first 20 terms of the period sequence agree. This suggests (but does not prove) that the two Laurent polynomials are mutation equivalent; this would not be surprising, as \cite{mohammadi3} shows that the statement holds for each of the Grassmannians and their respective pairs of polytopes. In particular, since the two period sequences agree up to 20 terms, it follows from \cite{HeubergerKalashnikov} that the first 20 terms of period sequence of $f$ agree with the first 20 terms of the quantum period of $K^3_{23}$. 
\end{eg}

\begin{eg} In this example, we consider the matching field on $K^4_{2 3}$ given by the $C_2$ grading. Since $C_2$ induces the $b=2$ block diagonal matching field for both $\Gr(2,12)$ and $\Gr(3,8)$ (by Lemmas \ref{lem:kgrading} and \ref{lem:kgrading1}), the first condition follows by \cite{mohammadi2}. To apply the theorem, compute the cones $C_1$ and $C_2$ and their intersection $C$. Using MAGMA, we compute that the cone $C$ is generated by 206 lattice elements, $126$ in degree one, and $80$ in degree 2. Just as in the previous example, Example \ref{eg:MFK323}, to each lattice generator $v$, we associate the canonical pair of linked tableaux $T^\pm_v$. Using MAGMA, we check that for each of the 206 generators $v$, the exponent of the leading monomial of $f_{T^\pm_v}$ is $v$ as required by the theorem.

We therefore obtain that there is a toric degeneration of $K^4_{23}$ to the toric variety $X_F$, where $F$ is the normal fan of the polytope $P$ obtained by taking the height 1 slice of $C$. The polytope $P$ has 142 vertices, not all lattice points. The toric variety $X_F$ is Gorenstein, Fano, \emph{and} terminal; so much nicer than the corresponding $b=0$ toric variety in Example \ref{eg:K423}. Let $Q$ be the polytope which is the convex hull of the rays of $F$. Then $Q$ has 30 vertices; other than the origin, these are the only lattice points in $Q$. This allows us to write down a candidate Laurent polynomial:
\begin{align*}
x_1 x_2 x_3 x_4 x_7 x_8 x_9/(x_5 x_6 x_{11} x_{12}) + x_1 x_2 x_3 x_4 x_7 x_8/(x_5 x_6 x_{11})\\ + 
x_1 x_2 x_3 x_7 x_8/(x_5 x_6) + x_1 x_3 x_7 x_9/(x_6 x_{12}) + x_1 x_3 x_7/(x_5 x_6 x_{10} x_{11} x_{12})\\ + 
x_1 x_3 x_7/(x_5 x_6 x_9 x_{10} x_{11}) + x_1 x_3/(x_5 x_6 x_9 x_{11}) + x_1 x_3 x_7/(x_4 x_5 x_6 x_9 x_{10}) \\
+ x_1 x_3/(x_4 x_5 x_6 x_9) + x_1/x_5 + x_2 x_3 x_7 x_8 + x_2 x_4 x_8 x_{10}/(x_5 x_{11}) \\+ 
x_3 x_5 x_7 x_9/(x_4 x_6 x_8 x_{10} x_{12}) + x_3 x_5 x_7/(x_4 x_6 x_8 x_{10}) + x_3 x_5/(x_4 x_6 x_8) \\+ 
x_3 x_7/(x_4 x_9 x_{10}) + x_3/(x_4 x_9) + x_3/(x_4 x_6) + x_4 + x_5 + x_6 + x_7 + x_8 + x_9\\ + x_{10} 
+ x_{11} + x_{12} + 1/(x_2 x_4 x_9 x_{10}) + x_5 x_{11}/(x_1 x_4 x_8 x_{10}) + 1/(x_1 x_3 x_7 x_8)\end{align*}
We have not computed the quantum period of $K^4_{23}$, and so cannot compare that sequence with the period sequence of this Laurent polynomial. However, we note that the anti-canonical divisor of $X_F$ has index $4$ in the class lattice of $X_F$, and hence the period sequence should have the same zero pattern as the expected period sequence of $K^4_{23}$, which also has Fano index 4. 
\end{eg}

\bibliographystyle{amsplain}
\bibliography{epsrc}
\end{document}